\newtheorem{theorem}{Theorem}
\newtheorem{lemma}[theorem]{Lemma}
\newtheorem{proposition}[theorem]{Proposition}
\newtheorem{remark}{Remark}
\newenvironment{proof-sketch}{\noindent{\bf Sketch of Proof}\hspace*{1em}}{\qed\bigskip}
\newcommand{\RR}{\mathbb R}
\newcommand{\ZZ}{\mathbb Z}
\newcommand{\di}{\displaystyle}
\renewcommand{\leq}{\leqslant}
\renewcommand{\geq}{\geqslant}
\begin{document}
%\hfill\today\bigskip

\title[The Lane-Emden equation with variable double-phase and multiple regime]{The Lane-Emden equation with variable double-phase and multiple regime}

%%%%%%%%%%%%%%%%%%%%%%%%%%%%%%%%%%%%%%%%%%%%%%%%%%%%%%%%%%%%%%%%%%%%%%%
\author[C.O. Alves]{Claudianor O. Alves}
\address[C.O. Alves]{Unidade Acad\^emica de Matem\'atica, Universidade Federal de Campina Grande, 58429-970, Campina Grande - PB, Brazil}
\email{\tt coalves@mat.ufcg.edu.br}

\author[V.D. R\u{a}dulescu]{Vicen\c{t}iu D. R\u{a}dulescu}
\address[V.D. R\u{a}dulescu]{Faculty of Applied Mathematics, AGH University of Science and Technology, 30-059 Krak\'ow, Poland \& Department of Mathematics, University of Craiova, 200585 Craiova, Romania }
\email{\tt radulescu@inf.ucv.ro}

\noindent\keywords{Lane-Emden equation, double-phase energy, variable exponent, nonhomogeneous differential operator, Palais principle of symmetric criticality.
\\
{\it 2010 Mathematics Subject Classification}. Primary: 35J20. Secondary: 35J75, 35J92, 35P30.}

\begin{abstract}
We are concerned with the study of the Lane-Emden equation with variable exponent and Dirichlet boundary condition. The feature of this paper is that the analysis that we develop does not assume any subcritical hypotheses and the reaction can fulfill a mixed regime (subcritical, critical and supercritical). We consider the radial and the nonradial cases, as well as a singular setting. The proofs combine variational and analytic methods with a version of the Palais principle of symmetric criticality.
\end{abstract}

\maketitle

\section{Introduction and abstract setting}

Let $\Omega\subseteq\RR^N$ be a bounded regular connected open set and assume that $p,q\in (1,\infty)$. The Lane-Emden problem
\begin{equation}\label{LE}\left\{\begin{array}{lll}
&\di -\Delta_pu=|u|^{q-2}u&\quad\mbox{in}\ \Omega\\
&\di u=0&\quad\mbox{on}\ \partial\Omega\\
&\di u\not\equiv 0&\quad\mbox{in}\ \Omega\end{array}\right.
\end{equation}
and its numerous versions play a central role in the qualitative analysis of nonlinear elliptic equations. Usually, this analysis is developed in relationship with the values of $q$ with respect to the Sobolev critical exponent $p^*$ of $p$, which is defined by
$$p^*=\left\{\begin{array}{lll}
&\di \frac{Np}{N-p}&\quad\mbox{if}\ 1<p<N\\
&+\infty&\quad\mbox{if}\ p\geq N.\end{array}\right.
$$

The following three basic situations can occur:

\smallskip (i) $q<p^*$ ({\it subcritical case}). Then the associated energy functional is either coercive (if $q<p$) or has a mountain pass geometry and satisfies the Palais-Smale condition (if $q>p$), hence problem \eqref{LE} has at least one solution. The case $p=q$ corresponds to an eigenvalue problem, so we cannot exclude a nonexistence property.

\smallskip (ii) $q=p^*$, provided that $1<p<N$ ({\it critical case}). In this case, the topology of $\Omega$ plays a crucial role. We recall the following basic result of Bahri and Coron \cite{bahri}. Let $H_d(\Omega;\ZZ_2)$ denote the homology of dimension $d$ of $\Omega$ with $\ZZ_2$-coefficients. The Bahri-Coron theorem states that if there exists a positive integer $d$ such that $H_d(\Omega;\ZZ_2)\not=0$, then problem \eqref{LE} has at least one positive solution. In particular, if $p=2$, $N=3$, $q=6$ and $\Omega$ is not contractible, then problem \eqref{LE} has at least one positive solution.

\smallskip (iii) $q>p^*$, provided that $1<p<N$ ({\it supercritical case}). This situation is delicate and a major role is played by the geometry of $\Omega$. For instance, if $\Omega$ is starshaped then problem \eqref{LE} does not have any solution (by Pohozaev's identity). Also, if $\Omega$ is an annulus, problem \eqref{LE} always has at least one solution.

In the case of variable exponents, the Lane-Emden problem \eqref{LE} becomes
\begin{equation}\label{LEvar}\left\{\begin{array}{lll}
&\di -\Delta_{p(x)}u=|u|^{q(x)-2}u&\quad\mbox{in}\ \Omega\\
&\di u=0&\quad\mbox{on}\ \partial\Omega\\
&\di u\not\equiv 0&\quad\mbox{in}\ \Omega,\end{array}\right.
\end{equation}
where $\Delta_{p(x)}u:={\rm div}\, (|\nabla u|^{p(x)-2}\nabla u)$. In this case, the critical exponent of $p(x)$ depends on the point and it is defined by
$$p^*(x)=\left\{\begin{array}{lll}
&\di \frac{Np(x)}{N-p(x)}&\quad\mbox{if}\ 1<p(x)<N\\
&+\infty&\quad\mbox{if}\ p(x)\geq N.\end{array}\right.
$$

Problem \eqref{LEvar} and its versions have been intensively studied in several recent works, in relationship with numerous applications to non-Newtonian fluids, image reconstruction, etc. We refer to R\u adulescu and Repov\v{s} \cite{radrep} for a comprehensive study of several classes of nonlinear problems with variable exponent. For instance, in \cite{radrep} it is studied the problem
\begin{equation}\label{LE1}\left\{\begin{array}{lll}
&\di -\Delta_{p(x)}u=\lambda\,|u|^{q(x)-2}u&\quad\mbox{in}\ \Omega\\
&\di u=0&\quad\mbox{on}\ \partial\Omega\\
&\di u\not\equiv 0&\quad\mbox{in}\ \Omega\end{array}\right.
\end{equation}
under the following hypotheses:\\
(h1) $1<\min_{x\in\overline\Omega}q(x)<\min_{x\in\overline\Omega}p(x)<\max_{x\in\overline\Omega}q(x)$;\\
(h2) $q(x)<p^*(x)$ for all $x\in\overline\Omega$.

 Theorem 3 in \cite[p. 43]{radrep} studies the case of low perturbations of the reaction and establishes that there exists $\lambda^*>0$ such that problem \eqref{LE1} has at least one solution  for all $\lambda \in(0,\lambda^*)$.

 Hypothesis (h2) corresponds to the {\it subcritical} case. However, the presence of {\it variable} exponents allows the following {\it almost critical} case: there is $z\in\Omega$ such that $q(x)<p^*(x)$ for all $x\in\Omega\setminus\{z\}$ and $q(z)=p(z)$. In this direction, we refer to Alves, Ercole and Huam\'an Bola\~nos \cite{AED}, where  the problem \ref{LE1} has been considered with $p(x)=2$ and $q(x)$  a functional that has a critical growth in a subset $ \Omega$.
 By using variational methods combined with the Lions concentration-compactness principle \cite{Lions}, the authors proved the existence of ground state solutions.

 Furthermore, problem \eqref{LEvar} can fulfill even a ``subcritical-critical-supercritical" triple regime, in the sense that $\Omega=\Omega_1\cup\Omega_2\cup\Omega_3$ and
 $$q(x)<p^*(x)\quad\mbox{if $x\in\Omega_1$};$$
 $$q(x)=p^*(x)\quad\mbox{if $x\in\Omega_2$};$$
 $$q(x)>p^*(x)\quad\mbox{if $x\in\Omega_3$}.$$

To the best of our knowledge, nonlinear PDEs with variable exponent and multiple regime (subcritical, critical and supercritical) have not been considered in the literature. The present paper is the first study dedicated to the qualitative analysis of the Lane-Emden equation with variable exponent and multiple regime.
We consider the radial and the nonradial cases, as well as a singular setting. More precisely,
in Section 2 we study the case where $\Omega$ is a ball centered at origin. In this abstract framework, a key role in the proofs is played by a recent version of the Palais principle of symmetric criticality developed by Kobayashi and Otani \cite[Theorem 2.2]{KS}. Let $E$ be a Banach space on which a symmetric group $G$ acts and let $J$ be a $G$-invariant functional defined on $E$. We recall that Palais \cite{palais} gave some sufficient conditions to guarantee the principle of symmetric criticality, in the sense that every critical point of $J$ restricted to the subspace of $G$-symmetric points becomes also a critical point of $J$ on the whole space $E$. This principle has been generalized by Kobayashi and Otani \cite{KS} to the case where $J$ is not differentiable within the setting that does not require the full variational structure, under the hypothesis that the action of $G$ is isometry or $G$ is compact. In Sections 3 and 4, we study the case where $\Omega$ is not a ball centered at origin, which is more delicate because we cannot used the principle of symmetric criticality mentioned above. 
In both sections we are concerned with the existence of nontrivial  solutions in the nonradial case. We overcome the lack of symmetry properties by developing a new strategy, see proof of Theorem \ref{t2} for more details. In the last section of the present paper we are concerned with a singular setting, which corresponds to a vanishing potential.

\subsection{Variable exponent Lebesgue and Sobolev spaces}
In this subsection, we recall some results on variable exponent Lebesgue and Sobolev spaces. For more details we refer to \cite{die,FSZ,radrep} and their references.

Let $p\in L^{\infty}(\mathbb{R}^{N})$ with $ p_-:={\rm essinf}_{x\in\RR^N}\,p(x)> 1$. The variable exponent Lebesgue space
$L^{p(x)}(\mathbb{R}^{N})$ is defined by
\[
L^{p(x)}(\mathbb{R}^{N})=\left\{
u:\mathbb{R}^{N} \to  \mathbb{R} \left\vert \,u\text{ is
	measurable and }\int_{\mathbb{R}^{N}}\left\vert u\right\vert
^{p(x)}\,dx<\infty\right.  \right\}
\]
endowed with the norm
\[
\left\vert u\right\vert _{p(x)}=\inf\left\{  \lambda>0\left\vert
\,\int_{\mathbb{R}^{N}}\left\vert \frac{u}{\lambda}\right\vert
^{p(x)}\,dx \leq1\right.  \right\}  \text{.}
\]
The variable exponent Sobolev space is defined by
\[
W^{1,p(x)}(\mathbb{R}^{N})=\left\{  u\in
L^{p(x)}(\mathbb{R}^{N})\left\vert \,\left\vert \nabla u\right\vert
\in L^{p(x)}(\mathbb{R}^{N})\right. \right\}
\]
with the norm
\[
\left\Vert u\right\Vert _{1,p(x)}=\left\vert u\right\vert
_{p(x)}+\left\vert \nabla u\right\vert _{p(x)}\text{.}
\]
With these norms, the spaces $L^{p(x)}(\mathbb{R}^{N})$ and
$W^{1,p(x)}(\mathbb{R}^{N})$ are reflexive and separable Banach spaces.

\begin{proposition}
	\label{p1}The functional $\rho:W^{1,p(x)}(\mathbb{R}^{N}) \to \mathbb{R}$ defined by
	\begin{equation}
	\rho(u)=\int_{\mathbb{R}^{N}}\left(  \left\vert \nabla u\right\vert
	^{p(x)}+\left\vert u\right\vert ^{p(x)}\right)\,dx
	\text{,} \label{ps}
	\end{equation}
	has the following properties:
	
	\begin{enumerate}
		\item[\emph{(i)}] If $\left\Vert u\right\Vert_{1,p(x)} \geq1$, then $\left\Vert u\right\Vert_{1,p(x)}
		^{p_{-}}\leq\rho(u)\leq\left\Vert u\right\Vert_{1,p(x)} ^{p_+}$.
		
		\item[\emph{(ii)}] If $\| u\|_{1,p(x)} \leq1$, then $\left\Vert u\right\Vert_{1,p(x)}
		^{p_+}\leq\rho(u)\leq\left\Vert u\right\Vert_{1,p(x)} ^{p_{-}}$.
	\end{enumerate}
	In particular, $\rho(u)=1$ if and only if $\left\Vert u \right\Vert_{1,p(x)} =1$ and if $ (u_n) \subset W^{ 1,p(x) }( \mathbb R^N ) $, then $\left\Vert u_{n}\right\Vert_{1,p(x)}
	\rightarrow0$ if and only if $\rho( u_{n}) \rightarrow0$.
\end{proposition}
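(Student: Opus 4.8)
The plan is to establish the norm–modular relations by exploiting the homogeneity of the map $t\mapsto t^{p(x)}$ together with the definition of the Luxemburg-type norm $\|u\|_{1,p(x)} = |u|_{p(x)} + |\nabla u|_{p(x)}$. Since this is the standard proof for these spaces, I would first record the single-component facts: for $v\in L^{p(x)}(\RR^N)$ with $v\neq 0$, one always has $\int_{\RR^N}|v/|v|_{p(x)}|^{p(x)}\,dx \le 1$, with equality when $|v|_{p(x)}>0$ (continuity of $\lambda\mapsto \int |v/\lambda|^{p(x)}$ and the infimum definition). The main structural inequality I would isolate is the elementary scaling bound: for $\lambda>1$ one has $\lambda^{p_-}\, t^{p(x)} \le (\lambda t)^{p(x)} \le \lambda^{p_+}\, t^{p(x)}$ pointwise in $x$ (since $p_-\le p(x)\le p_+$ and $\lambda>1$), and the reversed chain for $0<\lambda<1$. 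Integrating, this gives for any $u$ and any $\mu>0$:
\[
\min(\mu^{p_-},\mu^{p_+})\,\rho(u) \;\le\; \rho_*(\mu u)\;\le\; \max(\mu^{p_-},\mu^{p_+})\,\rho(u),
\]
where $\rho_*(w):=\int_{\RR^N}(|\nabla w|^{p(x)}+|w|^{p(x)})\,dx$, i.e. $\rho_*=\rho$; I keep the notation only to emphasize we are comparing modulars at different scales.

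Next I would prove the key bridge between $\rho$ and the sum-norm. The point is that $\rho(u)\le 1$ is equivalent to $\|u\|_{1,p(x)}\le 1$ and $\rho(u)\ge 1$ is equivalent to $\|u\|_{1,p(x)}\ge 1$; equivalently $\rho(u)=1 \iff \|u\|_{1,p(x)}=1$. One direction follows from the standard fact that $|v|_{p(x)}\le 1 \iff \int |v|^{p(x)}\,dx \le 1$ applied to both $v=u$ and $v=\nabla u$ after noting $\max(|u|_{p(x)},|\nabla u|_{p(x)})\le \|u\|_{1,p(x)}$; the other direction uses that if $\|u\|_{1,p(x)}>1$ then at least... more carefully, I would argue directly: if $\|u\|_{1,p(x)} = 1$, set $s=|u|_{p(x)}$, $t=|\nabla u|_{p(x)}$ with $s+t=1$, $s,t\le 1$, so $\int |u|^{p(x)}\le s^{p_-}\cdot(\text{something})$ — actually the cleanest route is via the scaling inequality: since $\|u/\|u\|_{1,p(x)}\|_{1,p(x)} = 1$, it suffices to prove $\rho(w)=1$ when $\|w\|_{1,p(x)}=1$. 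For such $w$, $|w|_{p(x)}\le 1$ and $|\nabla w|_{p(x)}\le 1$ force $\int|w|^{p(x)}\le |w|_{p(x)}\cdot$... I would instead simply cite the well-known modular–norm unit-ball coincidence for $L^{p(x)}$ componentwise and combine, or present the short self-contained argument: $\int|w|^{p(x)} \le |w|_{p(x)}^{p_-}$ is false in general, so the correct statement to use is $\int_{\RR^N}|w/|w|_{p(x)}|^{p(x)} = 1$ which upon scaling gives $\int|w|^{p(x)} \le |w|_{p(x)}^{p_-}$ when $|w|_{p(x)}\le 1$ — yes this holds since $|w|^{p(x)} = |w|_{p(x)}^{p(x)}\cdot |w/|w|_{p(x)}|^{p(x)} \le |w|_{p(x)}^{p_-}|w/|w|_{p(x)}|^{p(x)}$ as $|w|_{p(x)}\le 1$. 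Summing the bounds for $w$ and $\nabla w$ with $|w|_{p(x)}+|\nabla w|_{p(x)}=1$ gives $\rho(w)\le |w|_{p(x)}^{p_-}+|\nabla w|_{p(x)}^{p_-}\le \ldots$ which is $\le 1$ only after a convexity step; the matching lower bound $\rho(w)\ge 1$ comes from the reverse inequality $|w|^{p(x)}\ge |w|_{p(x)}^{p_+}|w/|w|_{p(x)}|^{p(x)}$ giving $\int|w|^{p(x)}\ge |w|_{p(x)}^{p_+}$ combined with the fact that $\int|w/|w|_{p(x)}|^{p(x)}=1$ forces at least one component's integral to be bounded below appropriately. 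I would streamline this using the homogeneous scaling of $\rho$ established above and the intermediate value property of $\mu\mapsto\rho(\mu w)$.

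With the unit-sphere coincidence in hand, (i) and (ii) follow immediately by homogeneity. For (i), assume $\|u\|_{1,p(x)}\ge 1$ and set $\mu = \|u\|_{1,p(x)}\ge 1$, $w = u/\mu$ so $\|w\|_{1,p(x)}=1$, hence $\rho(w)=1$; then $\rho(u)=\rho(\mu w)$ lies between $\mu^{p_-}\rho(w)=\mu^{p_-}$ and $\mu^{p_+}\rho(w)=\mu^{p_+}$ by the scaling inequality with $\mu\ge 1$, which is exactly $\|u\|_{1,p(x)}^{p_-}\le\rho(u)\le\|u\|_{1,p(x)}^{p_+}$. For (ii), the same argument with $\mu = \|u\|_{1,p(x)}\le 1$ flips the direction of the scaling inequality, yielding $\|u\|_{1,p(x)}^{p_+}\le\rho(u)\le\|u\|_{1,p(x)}^{p_-}$. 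The final ``in particular'' claims are then direct corollaries: $\rho(u)=1 \iff \|u\|_{1,p(x)}=1$ is the unit-sphere coincidence, and for a sequence $(u_n)$, if $\|u_n\|_{1,p(x)}\to 0$ then eventually $\|u_n\|_{1,p(x)}\le 1$ so (ii) gives $\rho(u_n)\le \|u_n\|_{1,p(x)}^{p_-}\to 0$; conversely if $\rho(u_n)\to 0$ then eventually $\rho(u_n)<1$, forcing $\|u_n\|_{1,p(x)}<1$ (else (i) would give $\rho(u_n)\ge 1$), and then (ii) gives $\|u_n\|_{1,p(x)}^{p_+}\le \rho(u_n)\to 0$, hence $\|u_n\|_{1,p(x)}\to 0$. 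The main obstacle is the careful proof of the unit-sphere coincidence $\|w\|_{1,p(x)}=1\Rightarrow\rho(w)=1$, since the sum-norm (rather than the max-norm) on $W^{1,p(x)}$ means one cannot simply reduce to the two component Lebesgue statements; handling the split between $|w|_{p(x)}$ and $|\nabla w|_{p(x)}$ summing to $1$ while controlling $\int|w|^{p(x)}$ and $\int|\nabla w|^{p(x)}$ requires the scaling inequality applied to each piece and a monotonicity/continuity argument for $\mu\mapsto\rho(\mu w)$ rather than a one-line modular identity.
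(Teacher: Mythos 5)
Your overall architecture --- reduce everything to the unit-sphere identity $\|w\|_{1,p(x)}=1\Rightarrow\rho(w)=1$ and then rescale via $\min(\mu^{p_-},\mu^{p_+})\,\rho(w)\le\rho(\mu w)\le\max(\mu^{p_-},\mu^{p_+})\,\rho(w)$ --- is the standard one, and your derivations of (i), (ii) and of both ``in particular'' claims \emph{from} that identity are correct. The gap is exactly where you sense it: the unit-sphere identity is never actually proved, and in fact it cannot be proved for the norm as the paper defines it, namely the sum $\|u\|_{1,p(x)}=|u|_{p(x)}+|\nabla u|_{p(x)}$. Take $p(x)\equiv 2$ (with $N\ge 3$) and any $u$ with $|u|_{2}=|\nabla u|_{2}=1/2$: then $\|u\|_{1,2}=1$ while $\rho(u)=|u|_2^2+|\nabla u|_2^2=1/2$. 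More generally, writing $s=|w|_{p(x)}$, $t=|\nabla w|_{p(x)}$ with $s+t=1$, your own componentwise modular bounds give only $s^{p_+}+t^{p_+}\le\rho(w)\le s^{p_-}+t^{p_-}$, and the left-hand side is strictly below $1$ whenever $p_+>1$ and $s,t\in(0,1)$. So the ``matching lower bound $\rho(w)\ge 1$'' you appeal to is false, and with it parts (i), (ii) and the equivalence $\rho(u)=1\iff\|u\|_{1,p(x)}=1$ fail as literally stated for the sum norm. No convexity or intermediate-value refinement will close this; the central lemma of your plan is not a technical obstacle but a false statement.

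The proposition is correct verbatim if $\|u\|_{1,p(x)}$ is understood as the Luxemburg norm of the modular $\rho$ itself, i.e.\ $\inf\{\lambda>0:\rho(u/\lambda)\le 1\}$: then $\mu\mapsto\rho(u/\mu)$ is continuous and nonincreasing, so $\rho(u/\|u\|_{1,p(x)})=1$ for $u\ne 0$, and your scaling step finishes the proof in two lines --- exactly the argument you already carried out correctly for the single-component space $L^{p(x)}$. That norm is equivalent to the sum norm (it sits between the maximum and the sum of the two component norms), so every consequence the paper actually uses survives; in particular the equivalence $\|u_n\|_{1,p(x)}\to 0\iff\rho(u_n)\to 0$ can be obtained componentwise from the exact $L^{p(x)}$ norm--modular relations without ever invoking (i)--(ii). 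To repair your write-up you should either switch to the Luxemburg norm of $\rho$ and prove the statement as is, or keep the sum norm and replace (i)--(ii) by two-sided estimates with multiplicative constants (e.g.\ $2^{-p_+}\|u\|_{1,p(x)}^{p_-}\le\rho(u)$ for $\|u\|_{1,p(x)}\ge 1$), which is all that is needed downstream.
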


\begin{remark}
	\label{r1}For the functional
	$\xi:L^{p(x)}(\mathbb{R}^{N})\rightarrow \mathbb{R}$ given by
	\[
	\xi(u)=\int_{\mathbb{R}^{N}}\left\vert u\right\vert
	^{p(x)}\,dx\text{,}
	\]
	the conclusion of Proposition \ref{p1} also holds, for example, if $ (u_n) \subset L^{p(x) }( \mathbb R^N ) $, then  $\left\vert u_{n}\right\vert
	_{p(x)}\rightarrow0$ if and only if $\xi(u_{n})\rightarrow0$. Moreover, from $(i)$ and $(ii)$,
	\begin{equation}
	\left\vert u\right\vert _{p(x)}\leq \max\left\{
	\left(\int_{\mathbb{R}^{N}}\left\vert u\right\vert
	^{p(x)}\,dx\right)  ^{1/p_{-}},\left(  \int_{\mathbb{R}^{N}
	}\left\vert u\right\vert ^{p(x)}\,dx\right)
	^{1/p_+} \right\} \text{.} \label{in}
	\end{equation}
\end{remark}

Related to the Lebesgue space $L^{h(x)}(\mathbb{R}^{N})$, we have the following generalized H\"{o}lder-type inequality.

\begin{proposition}
	[{\cite[p. 9]{Mu}}]\label{h} For $p\in L^{\infty}( \mathbb{R}^{N}) $ with $ p_- >1$, let  $p^{\prime }:\mathbb{R}^{N} \to \mathbb{R}$ be such that
	\[
	\frac{1}{p( x) }+\frac{1}{p^{\prime}( x) }=1\text{,\qquad a.e. }x\in
	\mathbb{R}^{N}\text{.}
	\]
	Then, for any $u\in L^{p( x) }( \mathbb{R}^{N})
	$ and  $v\in L^{p^{\prime}( x) }( \mathbb{R}^{N}) $,
	\begin{equation}
	\left\vert \int_{\mathbb{R}^{N}}uv\,dx\,\right\vert
	\leq \left( \frac{1}{p_-} + \frac{1}{p'_-} \right)\left\vert u\right\vert _{p( x) }\left\vert v\right\vert
	_{p^{\prime}( x) }\text{.} \label{hi}
	\end{equation}
\end{proposition}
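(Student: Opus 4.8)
The plan is to derive \eqref{hi} from the pointwise Young inequality, applied with the variable conjugate pair $\big(p(x),p'(x)\big)$, after renormalizing $u$ and $v$ by their Luxemburg norms. The hypothesis $p_->1$ enters precisely to guarantee that $p'\in L^{\infty}(\RR^N)$ with $p'_->1$ (indeed $p'_+=p_-/(p_--1)<\infty$ and $p'_-=p_+/(p_+-1)>1$, since $p_+<\infty$), so that $p'(x)$ is a.e.\ finite and the space $L^{p'(x)}(\RR^N)$ is meaningful.

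First I would dispose of the degenerate cases: if $|u|_{p(x)}=0$ or $|v|_{p'(x)}=0$, then $u=0$ or $v=0$ a.e.\ on $\RR^N$ and \eqref{hi} is trivial, so I assume $\lambda:=|u|_{p(x)}>0$ and $\mu:=|v|_{p'(x)}>0$. The one point requiring a little care is the \emph{modular-at-the-norm} estimate
$$\int_{\RR^N}\Big|\frac{u}{\lambda}\Big|^{p(x)}\,dx\le 1,\qquad \int_{\RR^N}\Big|\frac{v}{\mu}\Big|^{p'(x)}\,dx\le 1.$$
By definition of the infimum defining $|u|_{p(x)}$, there is a sequence $\lambda_n\downarrow\lambda$ with $\int_{\RR^N}|u/\lambda_n|^{p(x)}\,dx\le 1$; since $1/\lambda_n\uparrow 1/\lambda$ and $t\mapsto t^{p(x)}$ is nondecreasing on $[0,\infty)$, the integrands increase pointwise to $|u/\lambda|^{p(x)}$, so the monotone convergence theorem yields the first bound, and the second is identical.

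With these bounds in hand, for a.e.\ $x\in\RR^N$ I apply Young's inequality $ab\le a^{s}/s+b^{s'}/s'$ (valid for $a,b\ge 0$ and conjugate exponents $s,s'>1$) with $a=|u(x)|/\lambda$, $b=|v(x)|/\mu$, $s=p(x)$, $s'=p'(x)$, and then bound the coefficients via $1/p(x)\le 1/p_-$ and $1/p'(x)\le 1/p'_-$:
$$\frac{|u(x)v(x)|}{\lambda\mu}\le \frac{1}{p_-}\Big|\frac{u(x)}{\lambda}\Big|^{p(x)}+\frac{1}{p'_-}\Big|\frac{v(x)}{\mu}\Big|^{p'(x)}.$$
Integrating over $\RR^N$ and invoking the modular-at-the-norm bounds gives $\frac{1}{\lambda\mu}\int_{\RR^N}|uv|\,dx\le \frac{1}{p_-}+\frac{1}{p'_-}$, and multiplying through by $\lambda\mu=|u|_{p(x)}|v|_{p'(x)}$, together with $\big|\int_{\RR^N}uv\,dx\big|\le\int_{\RR^N}|uv|\,dx$, proves \eqref{hi}.

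The argument is essentially routine; the only genuinely delicate point is the modular-at-the-norm estimate, which is why I would isolate it as a separate step rather than quote it tacitly. I would also record that the constant is admissible: since $1/p'(x)=1-1/p(x)$ one has $1/p'_-=1-1/p_+$, hence $1/p_-+1/p'_-=1+(1/p_--1/p_+)\ge 1$, so \eqref{hi} is consistent with---and strictly weaker than---the classical Hölder inequality.
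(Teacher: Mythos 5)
Your proof is correct and is exactly the standard argument: the paper itself gives no proof, merely citing Musielak \cite{Mu}, and the argument there (Luxemburg-norm normalization, the modular bound $\int_{\mathbb{R}^N}|u/|u|_{p(x)}|^{p(x)}\,dx\le 1$, pointwise Young's inequality with the variable conjugate pair, and the bounds $1/p(x)\le 1/p_-$, $1/p'(x)\le 1/p'_-$) is the one you reproduce. Your care with the modular-at-the-norm step via monotone convergence is exactly the right point to isolate.
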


\begin{proposition}
	[{\cite[Theorems 1.1, 1.3]{FSZ}}]\label{pp1}Let $p:\mathbb{R}^{N}
	\to \mathbb{R}$ be a Lipschitz continuous satisfying $ 1 < p_- \leq p_+ < N $ and $t:\mathbb{R}^{N} \rightarrow \mathbb{R}$ be a measurable function.
	
	\begin{enumerate}
		\item[\emph{(i)}] If $p\leq t\leq p^{\ast}$, the embedding
		$W^{1,p(x)}(\mathbb{R}^{N})\hookrightarrow
		L^{t(x)}(\mathbb{R}^{N})$ is continuous.
		
		\item[\emph{(ii)}] If $p\leq t\ll p^{\ast}$, the embedding
		$W^{1,p(x)}(\mathbb{R}^{N})\hookrightarrow L_{\mathrm{loc}}^{t(x)}
		(\mathbb{R}^{N})$  is compact.
	\end{enumerate}
\end{proposition}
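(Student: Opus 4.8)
Since the statement is the variable-exponent counterpart of the classical Sobolev and Rellich--Kondrachov theorems, I would follow the classical scheme, with the Lipschitz regularity of $p$ entering exactly as the (log-)Hölder continuity needed to transfer harmonic analysis in $L^{p_0}$ to $L^{p(x)}$. For part (i) the plan is to reduce the continuous embedding $W^{1,p(x)}(\RR^N)\hookrightarrow L^{t(x)}(\RR^N)$ to a mapping property of the Riesz potential. Since $p$ is Lipschitz (hence globally log-Hölder continuous), $C_c^\infty(\RR^N)$ is dense in $W^{1,p(x)}(\RR^N)$, so it suffices to estimate $|u|_{t(x)}$ for $u\in C_c^\infty(\RR^N)$. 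For such $u$ one has the classical pointwise bound $|u(x)|\le c_N\,I_1(|\nabla u|)(x)$, where $I_1$ denotes the Riesz potential of order $1$. The analytic core is the variable-exponent Sobolev theorem for Riesz potentials: under $1<p_-\le p_+<N$ together with log-Hölder continuity of $p$, $I_1$ maps $L^{p(x)}(\RR^N)$ boundedly into $L^{p^*(x)}(\RR^N)$. This gives $|u|_{p^*(x)}\le C\,|\nabla u|_{p(x)}\le C\|u\|_{1,p(x)}$, i.e. the endpoint case $t=p^*$. For a general measurable $t$ with $p\le t\le p^*$, one uses the elementary inequality $a^{t(x)}\le a^{p(x)}+a^{p^*(x)}$ valid for $a\ge 0$; integrating over $\RR^N$ and converting between modular and norm by Proposition \ref{p1} and Remark \ref{r1} yields $|u|_{t(x)}\le C\|u\|_{1,p(x)}$.

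For part (ii), compactness being a local statement, I would fix a bounded open set $B\subset\RR^N$ and show that $W^{1,p(x)}(\RR^N)\hookrightarrow L^{t(x)}(B)$ is compact. The first ingredient is the variable-exponent Rellich--Kondrachov property $W^{1,p(x)}(B)\hookrightarrow\hookrightarrow L^{p(x)}(B)$, obtained by covering $B$ by finitely many small balls on each of which $p$ oscillates arbitrarily little, sandwiching $p$ between nearby constant exponents, invoking the classical Rellich theorem on each ball, and patching with a partition of unity. The second ingredient is an interpolation: given a bounded sequence $(u_n)$ in $W^{1,p(x)}(\RR^N)$, pass to a subsequence with $u_n\rightharpoonup u$ and put $v_n:=u_n-u$; by part (i), $(v_n)$ is bounded in $L^{p^*(x)}(B)$, while the Rellich--Kondrachov step gives $v_n\to 0$ in $L^{p(x)}(B)$. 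Since $t\ll p^*$, i.e. ${\rm essinf}_{x\in B}\,(p^*(x)-t(x))>0$, one can choose a \emph{constant} $\theta\in(0,1)$ with $\frac{1}{t(x)}\ge\frac{1-\theta}{p(x)}+\frac{\theta}{p^*(x)}$ on $B$; then a three-exponent version of the generalized Hölder inequality (extending Proposition \ref{h}), together with $|B|<\infty$, gives $|v_n|_{t(x),B}\le C\,|v_n|_{p(x),B}^{\,1-\theta}\,|v_n|_{p^*(x),B}^{\,\theta}$. The first factor tends to $0$ and the second stays bounded, so $v_n\to 0$ in $L^{t(x)}(B)$, which is the desired compactness.

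The main obstacle is the boundedness $I_1\colon L^{p(x)}(\RR^N)\to L^{p^*(x)}(\RR^N)$ in part (i) — equivalently, the Sobolev--Poincaré inequality for variable exponents — since this is precisely where the Lipschitz/log-Hölder hypothesis on $p$ is indispensable and where the argument departs from the constant-exponent proof beyond the classical fractional-integration theorem. If one prefers to bypass harmonic analysis, the same content can be supplied by a direct covering argument: local constant-exponent Sobolev inequalities on small cubes, glued by a partition of unity, with the error terms absorbed via the uniform continuity of $p$; this patching is the technical heart either way. By comparison, part (ii) is routine once one observes that the interpolation exponent must be taken \emph{constant}, which is exactly what the strict gap condition $t\ll p^*$ guarantees.
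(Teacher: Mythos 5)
This proposition is quoted by the paper from Fan--Shen--Zhao \cite{FSZ} and is given no proof in the text, so there is nothing internal to compare your argument against; I can only measure it against the cited source. Your \emph{secondary} route --- cover $\mathbb{R}^N$ by congruent small pieces on which the Lipschitz bound forces the oscillation of $p$ to be small, apply the constant-exponent Sobolev inequality with uniform constants on each piece, and patch, then handle general $p\leq t\leq p^{\ast}$ via $a^{t(x)}\leq a^{p(x)}+a^{p^{\ast}(x)}$ --- is essentially the proof actually given in \cite{FSZ}, and your part (ii) (classical Rellich locally, plus interpolation against the $L^{p^{\ast}(x)}$ bound with a \emph{constant} exponent $\theta$ extracted from the uniform gap $t\ll p^{\ast}$) is the standard and correct way to get local compactness; the computation showing that $\inf(p^{\ast}-t)>0$ yields a single admissible $\theta\in(0,1)$ checks out.

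The one point I would push back on is your \emph{primary} route for part (i). The boundedness of the Riesz potential $I_1\colon L^{p(x)}(\mathbb{R}^N)\to L^{p^{\ast}(x)}(\mathbb{R}^N)$ on the \emph{whole space} is not a consequence of local log-H\"older continuity alone: in the standard theory (Diening, Cruz-Uribe--Fiorenza--Neugebauer) one also needs a log-H\"older \emph{decay condition at infinity} ($|p(x)-p_\infty|\leq C/\log(e+|x|)$), and a bounded Lipschitz exponent on $\mathbb{R}^N$ need not satisfy it. The same caveat touches your appeal to density of $C_c^{\infty}$, since mollification in $L^{p(x)}$ is exactly one of the operations the paper warns is not available for free. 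So as written, the "analytic core" you invoke is stated under insufficient hypotheses for the global setting; the statement you are proving is nevertheless true because the covering argument bypasses the maximal-operator machinery entirely (the global Lipschitz constant makes the local constants uniform, and the passage from modular estimates summed over the cover back to norms uses only Proposition \ref{p1}). I would therefore promote the covering argument from a remark to the actual proof, or else add the decay-at-infinity hypothesis explicitly when citing the Riesz potential theorem.
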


The  Lebesgue and Sobolev
spaces with variable exponents coincide with the usual Lebes\-gue and Sobolev spaces provided that $p$ is constant. According to \cite[pp. 8-9]{radrep}, these function spaces $L^{p(x)}(\mathbb{R}^N)$ and $W^{1,p(x)}(\mathbb{R}^N)$ have some non-usual properties,
such as:

(i)
Assuming that $1<p^-\leq p^+<\infty$ and $p:\overline\Omega\rightarrow [1,\infty)$ is a smooth function, then the following co-area formula
$$\int_\Omega |u(x)|^pdx=p\int_0^\infty t^{p-1}\,|\{x\in\Omega ;\ |u(x)|>t\}|\,dt$$
has  no analogue in the framework of variable exponents.

(ii) Spaces $L^{p(x)}(\mathbb{R}^N)$ do {\it not} satisfy the {\it mean continuity property}. More exactly, if $p$ is nonconstant and continuous in an open ball $B$, then there is some $u\in L^{p(x)}(B)$ such that $u(x+h)\not\in L^{p(x)}(B)$ for every $h\in{\mathbb R}^N$ with arbitrary small norm.

(iii) Function spaces with variable exponent
 are {\it never} invariant with respect to translations.  The convolution is also limited. For instance,  the classical Young inequality
$$| f*g|_{p(x)}\leq C\, | f|_{p(x)}\, \| g\|_{L^1}$$
remains true if and only if
$p$ is constant.

\subsection{Double-phase problems and their historical traces}

Let $\Omega$ be a bounded domain in $\RR^N$ ($N\geq 2$) with a smooth boundary. If $u:\Omega\to\RR^N$ is the displacement and $Du$ is the $N\times N$  matrix of the deformation gradient, then the total energy can be represented by an integral of the type
\begin{equation}\label{paolo}I(u)=\int_\Omega f(x,Du(x))dx,\end{equation}
where the energy function $f=f(x,\xi):\Omega\times\RR^{N\times N}\to\RR$ is quasiconvex with respect to $\xi$, see Morrey \cite{morrey}. One of the simplest examples considered by Ball is given by functions $f$ of the type
$$f(\xi)=g(\xi)+h({\rm det}\,\xi),$$
where ${\rm det}\,\xi$ is the determinant of the $N\times N$ matrix $\xi$, and $g$, $h$ are nonnegative convex functions, which satisfy the growth conditions
$$g(\xi)\geq c_1\,|\xi|^p;\quad\lim_{t\to+\infty}h(t)=+\infty,$$
where $c_1$ is a positive constant and $1<p<N$. The condition $p\leq N$ is necessary to study the existence of equilibrium solutions with cavities, that is, minima of the integral \eqref{paolo} that are discontinuous at one point where a cavity forms; in fact, every $u$ with finite energy belongs to the Sobolev space $W^{1,p}(\Omega,\RR^N)$, and thus it is a continuous function if $p>N$. In accordance with these problems arising in nonlinear elasticity, Marcellini \cite{marce1,marce2,marce3} considered continuous functions $f=f(x,u)$ with {\it unbalanced growth} that satisfy
$$c_1\,|u|^p\leq |f(x,u)|\leq c_2\,(1+|u|^q)\quad\mbox{for all}\ (x,u)\in\Omega\times\RR,$$
where $c_1$, $c_2$ are positive constants and $1\leq p\leq q$. Regularity and existence of solutions of elliptic equations with $p,q$--growth conditions were studied in \cite{marce2}.

The study of non-autonomous functionals characterized by the fact that the energy density changes its ellipticity and growth properties according to the point has been continued by Mingione {\it et al.} \cite{1Bar-Col-Min, 2Bar-Col-Min, beck, 8Col-Min, 9Col-Min}, R\u adulescu {\it et al.} \cite{bahr19, cencelj, prrzamp, prrpams, zhang}, etc. These contributions are in relationship with the work of Zhikov \cite{23Zhikov, 24Zhikov}, which describe the
behavior of phenomena arising in nonlinear
elasticity. In fact, variational problems with nonstandard integrands were introduced at the beginning of the
1980's and were studied in the context of averaging and the Lavrent'ev phenomenon.
 Zhikov  provided models for strongly anisotropic materials in the context of homogenisation.
In particular, he considered the following model
functional
\begin{equation}\label{mingfunc}
{\mathcal P}_{p,q}(u) :=\int_\Omega (|Du|^p+a(x)|Du|^q)dx,\quad 0\leq a(x)\leq L,\ 1<p<q,
\end{equation}
where the modulating coefficient $a(x)$ dictates the geometry of the composite made of
two differential materials, with hardening exponents $p$ and $q$, respectively.

Another significant model example of a functional with $(p,q)$--growth studied by Mingione {\it et al.} is given by
$$u\mapsto \int_\Omega |Du|^p\log (1+|Du|)dx,\quad p\geq 1,$$
which is a logarithmic perturbation of the $p$-Dirichlet energy.

\section{Problem 1: the radial case}

In this section we consider $p,q,m,a:\overline{B}_R(0) \to \mathbb{R}$ four continuous functions satisfying : \\
$$
\left\{
\begin{array}{l}
1<p_-=\min_{x \in \overline{B}_R(0) }p(x) \leq \max_{x \in \overline{B}_R(0) }p(x)=p_+<N.\\
1<m_-=\min_{x \in \overline{B}_R(0) }m(x) \leq \max_{x \in \overline{B}_R(0) }m(x)=m_+<N.
\end{array}
\right.
\eqno(H1)
$$
$$
0 \leq a(x) \leq L, \quad \forall x \in \overline{B}_R(0). \eqno(H2)
$$
$$
p(x)=p(|x|), a(x)=a(|x|) \quad \mbox{and} \quad q(x)=q(|x|), \quad \forall x \in \overline{B}_R(0).
\eqno(H3)
$$
We assume that there exists  $0<r<R$ such that
$$
q(x) \geq 0 \quad \forall x \in \overline{\Omega} \quad \mbox{and} \quad p_+<q^r_{-}=\min_{x \in \overline{B}_r(0) }q(x)\leq \max_{x \in \overline{B}_r(0) }q(x)=q^r_{+}< \min_{x\in\overline{\Omega}}p^*(x).
\eqno(H4)
$$

Note that $q$ is subcritical in  $\overline{B}_r(0)$, but there is no hypotheses on the function $q$ in the annulus $A_{R,r}= \overline{B}_R(0) \setminus B_r(0)$, hence $q$ can have a supercritical growth close to the boundary. However, note that for any $t \in (0,R)$ we have the  continuous embedding
$$
W^{1,p(x)}(B_R(0)) \hookrightarrow W^{1,p_-}(A_{R,t})
$$
and the compact embedding
$$
W_{rad}^{1,p_-}(A_{R,t}) \hookrightarrow C(\overline{A}_{R,t}),
$$
which is due to Strauss \cite{Strauss}.

Therefore the embedding
\begin{equation} \label{EMB1}
W_{rad}^{1,p(x)}(B_R(0)) \hookrightarrow C(\overline{A}_{R,t}),
\end{equation}
is compact, where
$$
W_{rad}^{1,p(x)}(B_R(0))=\{u \in W^{1,p(x)}(B_R(0))\,\;\,u(x)=u(|x|) \quad \mbox{a.e} \quad x \in B_R(0)\}.
$$
Hence, it follows that the embedding
\begin{equation} \label{EB1}
W_{rad}^{1,p(x)}(B_R(0)) \hookrightarrow L^{q(x)}(B_R(0)),
\end{equation}
is also compact, which is crucial in our approach.

In what follows, $\Delta_{p(x)}$ denotes the well known $p(x)$-Laplacian operator  and  $\Delta_{m(x),a(x)}$ is the differential operator defined by
$$
\Delta_{m(x),a(x)}u={\rm div}\,(a(x)|\nabla u|^{m(x)-2}\nabla u).
$$
Moreover, when $a \not=0$, we set
$$
E=W_0^{1,p(x)}(B_R(0)) \cap W_{a(x),0}^{1,m(x)}(B_R(0)),
$$
 where $W_{a(x),0}^{1,m(x)}(B_R(0))$ is the usual space $W_{0}^{1,m(x)}(B_R(0))$,  endowed with the norm
\[
\| \nabla u\| _{m(x),a(x)}=\inf\left\{  \lambda>0\left\vert
\,\int_{\mathbb{R}^{N}}a(x)\left\vert \frac{|\nabla u|}{\lambda}\right\vert
^{m(x)}\,dx \leq1\right.  \right\}.
\]
Hereafter, we endow $E$ with the norm
$$
\|u\|=\|\nabla u\|_{p(x)}+\|\nabla u\|_{m(x),a(x)}.
$$
We observe that if  $a=0$, then $E=W_0^{1,p(x)}(B_R(0))$ and $\|\,\,\,\,\|$ is exactly the usual norm in  $W_0^{1,p(x)}(B_R(0))$.

From the definition of $E$, we have the continuous embedding
$$
E \hookrightarrow  W_0^{1,p(x)}(B_R(0)).
$$
This fact combined with (\ref{EB1}) implies that the embedding
\begin{equation} \label{EB1*}
E_{rad}(B_R(0)) \hookrightarrow L^{q(x)}(B_R(0)),
\end{equation}
is also compact, where
$$
E_{rad}=W_{rad,0}^{1,p(x)}(B_R(0)) \cap W_{rad,0}^{1,m(x)}(B_R(0)).
$$

In the sequel, $B_R$ denotes the ball $B_R(0)$.

\begin{theorem}\label{t1}
Assume that conditions $(H1)-(H4)$ are fulfilled. Then the following nonhomogeneous boundary value problem
$$
\left\{
\begin{array}{l}
-\Delta_{p(x)}u-\Delta_{m(x),a(x)}u=|u|^{q(x)-2}u \quad \mbox{in} \quad B_R, \\
u=0 \quad \mbox{on} \quad \partial B_R
\end{array}
\right.
\eqno{(P_1)}
$$
has a nontrivial solution in $E$.
\end{theorem}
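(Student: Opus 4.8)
The plan is to run the variational machinery on the radial subspace $E_{rad}$, where the compact embedding \eqref{EB1*} restores the compactness destroyed by the supercritical growth of $q$ near $\partial B_R$, obtain a nontrivial critical point there by a mountain pass argument, and then transfer it to a weak solution of $(P_1)$ on all of $B_R$ via the Kobayashi--Otani version of the Palais principle of symmetric criticality. Concretely, consider the energy functional $J\colon E_{rad}\to\RR$,
$$
J(u)=\int_{B_R}\frac{|\nabla u|^{p(x)}}{p(x)}\,dx+\int_{B_R}\frac{a(x)\,|\nabla u|^{m(x)}}{m(x)}\,dx-\int_{B_R}\frac{|u|^{q(x)}}{q(x)}\,dx .
$$
The first two terms are of class $C^1$ by the standard properties of the modulars recorded in Proposition \ref{p1} and Remark \ref{r1}; the reaction term is finite and $C^1$ on $E_{rad}$ exactly because $E_{rad}\hookrightarrow L^{q(x)}(B_R)$ continuously, and the compactness of this embedding makes the map $u\mapsto|u|^{q(x)-2}u$ compact from $E_{rad}$ into its dual. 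Thus $J\in C^1(E_{rad})$ with the expected derivative.

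Next I would verify the mountain pass geometry of $J$ on $E_{rad}$. Because $q^r_{-}>p_+$, the reaction dominates the operator on $B_r$: choosing a fixed nonzero radial $w\in C_c^\infty(B_r)$ one gets $J(tw)\to-\infty$ as $t\to+\infty$, so $J$ is unbounded below (and, in particular, not coercive, which rules out direct minimization). For the local geometry near $0$ one splits the reaction over $B_r$ and over the annulus $A_{R,r}$: on $B_r$ the exponent is subcritical and superlinear, $p_+<q^r_{-}\le q^r_{+}<\min_{x\in\overline\Omega}p^{*}(x)$, so by the subcritical Sobolev embedding and the modular inequalities of Proposition \ref{p1} this part is controlled by a power of $\|u\|$ larger than $p_+$, whereas on $A_{R,r}$ the uniform bound coming from \eqref{EMB1} controls the reaction there as well; together with $J(0)=0$ and $(H4)$ this gives constants $\rho,\alpha>0$ with $\inf_{\|u\|=\rho}J\ge\alpha$.

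The Palais--Smale condition is then checked in the usual way: if $(u_n)\subset E_{rad}$ satisfies $J(u_n)\to c$ and $J'(u_n)\to0$, then, evaluating $J(u_n)-\frac{1}{q^r_{-}}\langle J'(u_n),u_n\rangle$ and using $q^r_{-}>p_+$, the modulars of $\nabla u_n$ are bounded, so $(u_n)$ is bounded in $E_{rad}$; along a subsequence $u_n\rightharpoonup u$, hence $u_n\to u$ in $L^{q(x)}(B_R)$ by \eqref{EB1*}, and so $|u_n|^{q(x)-2}u_n\to|u|^{q(x)-2}u$ strongly in $E_{rad}^{*}$; testing $J'(u_n)-J'(u)$ with $u_n-u$ and invoking the $(S_+)$-property of $u\mapsto-\Delta_{p(x)}u-\Delta_{m(x),a(x)}u$ yields $u_n\to u$ in $E_{rad}$. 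The mountain pass theorem now provides a nontrivial $u\in E_{rad}$ with $J'(u)=0$ in $E_{rad}^{*}$. Finally, let $G=O(N)$ act by rotations on the Banach space $X:=E\cap L^{q(x)}(B_R)$; by hypothesis $(H3)$ this action is isometric and $J$ is finite, $C^1$ and $G$-invariant on $X$, while the compactness of \eqref{EB1*} identifies the fixed-point set $X^{G}$ with $E_{rad}$. By the Kobayashi--Otani form of the principle of symmetric criticality \cite[Theorem 2.2]{KS}, $u$ is a critical point of $J$ on $X$; since $C_c^\infty(B_R)\subset X$, this means $\langle J'(u),\varphi\rangle=0$ for every $\varphi\in C_c^\infty(B_R)$, i.e. $u\in E$ is a nontrivial weak solution of $(P_1)$. (If one wants the notion of solution to be unambiguous in spite of the possible supercriticality of $q$ near $\partial B_R$, local Moser iteration near the origin, where $q$ is subcritical, together with \eqref{EMB1} on annuli shows that $u\in L^\infty(B_R)$.)

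The main obstacle is, to my mind, the functional-analytic framing rather than any individual estimate. Since $q$ may grow supercritically near $\partial B_R$ with no upper bound, the energy $J$ is not even finite on the whole of $E$, so the usual min-max scheme cannot be set up there; everything rests on confining the argument to $E_{rad}$, where Strauss's compact embedding into $C(\overline{A}_{R,t})$ makes the reaction term (and its derivative) compact, and then on checking carefully that the hypotheses of the symmetric criticality principle are met on the ambient space $X$ (isometric $O(N)$-action, $G$-invariance, and the identification $X^{G}=E_{rad}$), including the possibly non-smooth contribution of the modular attached to $\|\cdot\|_{m(x),a(x)}$ when $a$ vanishes. A secondary technical point is the $(S_+)$-property for the sum of the two variable-exponent operators, which underlies the Palais--Smale compactness.
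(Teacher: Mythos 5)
Your first half --- running the mountain pass argument for $I$ on $E_{rad}$, with the Strauss-type compact embeddings \eqref{EMB1} and \eqref{EB1*} supplying both the well-definedness of the reaction term and the $(PS)$ condition --- is exactly the paper's argument, and you correctly identify the real obstacle: $I$ is not finite on all of $E$, so symmetric criticality cannot be applied to $I$ on $E$. Where you genuinely diverge is in how you circumvent this. The paper freezes the nonlinearity at the critical point $u$ already found: it replaces $|t|^{q(x)-2}t$ by $g(x,t)=\xi(|x|)|t|^{q(x)-2}t+(1-\xi(|x|))|u(x)|^{q(x)-2}u(x)$ with $\xi$ supported in $B_{3r/5}\subset B_r$; since $u\in C(\overline A_{R,r/2})$ by \eqref{EMB1}, the auxiliary nonlinearity $g$ is subcritical, the auxiliary energy $J$ is $C^1$ on \emph{all} of $E$, $u$ is still a critical point of $J|_{E_{rad}}$, Kobayashi--Otani applies on $E$ itself, and $g(x,u(x))=|u(x)|^{q(x)-2}u(x)$ converts $J'(u)=0$ into the weak form of $(P_1)$. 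You instead keep the original functional and shrink the ambient space to $X=E\cap L^{q(x)}(B_R)$, where $O(N)$ acts isometrically (rotations are harmless in a variable-exponent space precisely because the exponents are radial), $X^G=E_{rad}$ with equivalent norms by \eqref{EB1*}, and $C_c^\infty(B_R)\subset X$ since $q$ is bounded. This is a workable and arguably more direct alternative. What the paper's detour buys, and what your route must supply by hand, is the $C^1$-regularity of $u\mapsto\int_{B_R}q(x)^{-1}|u|^{q(x)}\,dx$ on $L^{q(x)}(B_R)$ over the annulus $A_{R,r}$, where $(H4)$ only guarantees $q\ge 0$: the Nemytskii operator $u\mapsto|u|^{q(x)-2}u$ needs $q_->1$ there, whereas the frozen $g$ is constant in $t$ outside $B_{3r/5}$ and therefore never requires any smoothness of $t\mapsto|t|^{q(x)}$ where $q$ may be small (this is also what lets the paper treat $q\to 0$ in Section 4). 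Two further remarks: the isometry and $G$-invariance you invoke require $m(x)=m(|x|)$, which $(H3)$ does not state --- but the paper's own application of Kobayashi--Otani needs this too, so it is a shared rather than a distinguishing issue; and your sketches of the mountain pass geometry and of the boundedness of $(PS)$ sequences on the annulus (where $q$ may lie below $p_-$ or far above $p^*$, so the Ambrosetti--Rabinowitz-type computation with $\theta=q^r_-$ needs the $C(\overline A_{R,r})$ control) are at the same level of detail as the paper's one-line assertion and inherit the same delicate points.
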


\begin{proof}
The natural candidate to be the energy associated to problem $(P_1)$ is  the following double-phase functional with variable exponents:
$$
I(u)=\int_{B_R}\left(\frac{1}{p(x)}|\nabla u|^{p(x)}+\frac{a(x)}{m(x)}|\nabla u|^{m(x)}\right)\,dx-\int_{B_R}\frac{1}{q(x)}|u|^{q(x)}\,dx.
$$
However, this functional is not well defined on the whole space $E$
because we do not assume any growth condition on $q$ in the annulus $A_{R,r}$. In the sequel we will restrict  $I$ to $E_{rad}$, because $I \in C^{1}(E_{rad},\mathbb{R})$ and for all $u,v \in E_{rad}$
$$
I'(u)v=\int_{B_R}(|\nabla u|^{p(x)-2}\nabla u\nabla v+a(x)|\nabla u|^{m(x)-2}\nabla u\nabla v)\,dx-\int_{B_R}|u|^{q(x)-2}uv\,dx.
$$

Now, it is easy to prove that $I$ satisfies the mountain pass geometry and also the $(PS)$ condition, because we have the compact embedding (\ref{EB1}).  From this, we can apply the mountain pass theorem to find a nontrivial critical point $u \in E_{rad}$.

Our goal is to prove that $u$ is in fact a critical point of $I$ in the whole space $E$. However, we cannot applied directly the Palais principle of symmetric criticality, because $I$ is not well defined in whole  $E$. In order to overcome this difficulty, we will use the following trick: consider the function
$$
g(x,t)=\xi(|x|)|t|^{q(x)}+(1-\xi(|x|))|u(x)|^{q(x)}, \quad \forall x \in B_R,
$$
where $\xi \in C^{\infty}([0,R],\mathbb{R})$ satisfies
$$
\xi(x)=
\left\{
\begin{array}{l}
1, \quad x \in x \in \overline{B}_{\frac{r}{2}}(0) \\
0, \quad x \in x \in \overline{B}_R(0) \setminus \overline{B}_{\frac{3r}{5}}(0).
\end{array}
\right.
$$
Since $u \in C(\overline{A}_{R,\frac{r}{2}})$ ( see (\ref{EMB1})), it follows from $(H4)$ that
$$
|g(x,t)| \leq C(|t|^{q^r_{+}}+1), \quad \forall (x,t) \in B_R \times \mathbb{R}.
$$
This fact implies that $g$ has a subcritical growth.  Consider the nonlinear problem
$$
\left\{
\begin{array}{l}
-\Delta_{p(x)}w-\Delta_{m(x),a(x)}w=g(x,w) \quad \mbox{in} \quad B_R, \\
w=0 \quad \mbox{on} \quad \partial B_R,
\end{array}
\right.
\eqno{(P_g)}
$$
whose associated energy is given by
$$
J(w)=\int_{B_R}\left(\frac{1}{p(x)}|\nabla w|^{p(x)}+\frac{a(x)}{m(x)}|\nabla w|^{m(x)}\right)\,dx-\int_{B_R}G(x,w)\,dx,
$$
where $G(x,t)=\int_{0}^{t}g(x,s)\,ds$.

Since $g$ is subcritical, it follows that $J$ is well defined in the whole space $E$, $J \in C^{1}(E,\mathbb{R})$ and
$$
J'(u)v=\int_{B_R}(|\nabla w|^{p(x)-2}\nabla w\nabla v+a(x)|\nabla w|^{m(x)-2}\nabla w\nabla v)\,dx-\int_{B_R}g(x,w)v\,dx, \quad \forall  u,v \in E.
$$
Since
$$
g(x,u(x))=|u|^{q(x)-2}u(x), \quad \forall x \in B_R,
$$
we see that $u$ is a critical point of $J$ restricted to  $E_{rad}$. Now we can apply the  Palais principle of symmetric criticality developed by Kobayashi and Otani \cite[Theorem 2.2]{KS} to conclude that $u$ is a nontrivial critical point of $J$ in the whole $E$.
\end{proof}

\section{Problem 2: the non-radial case}

In this section, we study the existence of nontrivial solutions for the following problem:
$$
\left\{
\begin{array}{l}
-\Delta_{p(x)}u-\Delta_{m(x),a(x)}u=|u|^{q(x)-2}u \quad \mbox{in} \quad \Omega, \\
u=0 \quad \mbox{on} \quad \partial \Omega,
\end{array}
\right.
\eqno{(P_2)}
$$
where $\Omega$ is a bounded domain with smooth boundary. We assume that there exist positive numbers $r<R$ such that $B_R \subset \Omega$ and
$$
 a(x)=a_0 \qquad \mbox{for all}\ x \in A_{R,r} .
$$
Related to the functions $p,q,m,a:\overline{\Omega} \to \mathbb{R}$, we assume that they are continuous and satisfy the following conditions:
$$
\left\{
\begin{array}{l}
1<p_-=\min_{x \in \overline{\Omega} }p(x) \leq \max_{x \in \overline{\Omega}}p(x)=p_+<N, \\
1<m_-=\min_{x \in \overline{\Omega} }m(x) \leq \max_{x \in \overline{\Omega} }m(x)=m_+<N.
\end{array}
\right.
\eqno(H5)
$$
$$
0 \leq a(x) \leq L, \quad \forall x \in \overline{\Omega} \eqno(H6)
$$
$$
p(x)=p(|x|) \quad \mbox{and} \quad q(x)=q(|x|), \quad \forall x \in \overline{A}_{R,r},
\eqno(H7)
$$
and
$$
q(x) \geq 0 \quad \forall x \in \overline{\Omega} \quad \mbox{and} \quad p_+<q^{A}_{-}=\min_{x \in \overline{\Omega} \setminus A_{R,r} }q(x)=q^{A}_{+}\leq \max_{x \in \overline{\Omega} \setminus A_{R,r} }q(x) < \min_{x \in \overline{\Omega}}p^*(x).
\eqno(H8)
$$

We point out that we are not assuming any growth condition on $q$ in the annulus $A_{R,r}$, hence $q$ can have a supercritical growth in that region.

\begin{theorem}\label{t2}
Assume that hypotheses $(H5)-(H8)$ are fulfilled. Then problem $(P_2)$
has a nontrivial solution in $E$.
\end{theorem}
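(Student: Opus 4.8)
\begin{proof-sketch}
The idea is to imitate the scheme of Theorem \ref{t1} — restrict the energy to a subspace on which it is of class $C^1$, enjoys a compact embedding into $L^{q(x)}(\Omega)$ and has the mountain pass geometry, produce a critical point there, modify the reaction by a cut-off so as to reach a genuinely subcritical problem, and finally promote the constrained critical point to a solution of $(P_2)$ — but now radial symmetry is available only on the annulus $A_{R,r}$, so the symmetrisation must be carried out \emph{inside} that annulus.

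First I would work in the closed subspace
$$
\mathcal E:=\{v\in E:\ v(x)=v(|x|)\ \text{a.e. on}\ A_{R,r}\}.
$$
On $\mathcal E$ the natural energy
$$
I(v)=\int_\Omega\left(\frac{1}{p(x)}|\nabla v|^{p(x)}+\frac{a(x)}{m(x)}|\nabla v|^{m(x)}\right)dx-\int_\Omega\frac{1}{q(x)}|v|^{q(x)}\,dx
$$
is well defined and of class $C^1$: on $A_{R,r}$ the Strauss embedding $W_{rad}^{1,p_-}(A_{R,r})\hookrightarrow C(\overline{A}_{R,r})$ makes a radial $v$ bounded, whereas on $\Omega\setminus A_{R,r}=B_r\cup(\Omega\setminus\overline{B}_R)$ the exponent $q$ is subcritical by $(H8)$; the same splitting shows that $\mathcal E\hookrightarrow L^{q(x)}(\Omega)$ is compact. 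As in the proof of Theorem \ref{t1}, $I|_{\mathcal E}$ then has the mountain pass geometry (using $p_+<q$ on $\Omega\setminus A_{R,r}$) and satisfies $(PS)$, so the mountain pass theorem provides a nontrivial critical point $u\in\mathcal E$ of $I|_{\mathcal E}$.

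Next I would reproduce the cut-off trick. Since $u\in\mathcal E$, Strauss' embedding gives $u\in C(\overline{A}_{R,r})$, so $u$ is bounded on $A_{R,r}$; moreover, testing $I|_{\mathcal E}$ against functions supported in $B_r$, respectively in $\Omega\setminus\overline{B}_R$ (all of them lying in $\mathcal E$), shows that $u$ weakly solves the equation in those two subcritical regions, whence $u\in L^\infty(\Omega)$ by the regularity theory for operators of $p(x)$-Laplacian type; in particular the reaction $|u|^{q(x)-2}u$ is bounded on $\Omega$. Picking $\xi\in C^\infty(\overline\Omega)$ with $\xi\equiv0$ on $A_{R,r}$ and $\xi\equiv1$ outside a thin neighbourhood of $A_{R,r}$ contained in the subcritical region, and setting
$$
g(x,t)=\xi(x)|t|^{q(x)-2}t+(1-\xi(x))|u(x)|^{q(x)-2}u(x),
$$
one obtains a Carath\'eodory function of subcritical growth with $g(x,u(x))\equiv|u(x)|^{q(x)-2}u(x)$. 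Hence the energy $J$ of the corresponding problem $(P_g)$ is of class $C^1$ on all of $E$, and $u$ is a critical point of $J|_{\mathcal E}$, since $J'(u)v=I'(u)v$ for every $v\in\mathcal E$.

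It remains to upgrade $u$ to a critical point of $J$ on the whole of $E$, and this is the step I expect to be the main obstacle. Two contributions are free: test functions supported in $B_r$ or in $\Omega\setminus\overline{B}_R$ belong to $\mathcal E$, so $J'(u)$ already annihilates them, i.e.\ $u$ solves $(P_g)$ weakly there. On the annulus $A_{R,r}$, which is $O(N)$-invariant and on which $p,q,m$ are radial (see $(H7)$) and $a\equiv a_0$, the localised functional is $O(N)$-invariant and the $O(N)$-action on $W^{1,p(x)}_0(B_R\setminus\overline{B}_r)\cap W^{1,m(x)}_0(B_R\setminus\overline{B}_r)$ is an isometry; since $u|_{A_{R,r}}$ is radial, the Kobayashi--Otani version of the principle of symmetric criticality \cite[Theorem 2.2]{KS}, applied on $A_{R,r}$ to the functional restricted to the affine space $u|_{A_{R,r}}+\big(W^{1,p(x)}_0(B_R\setminus\overline{B}_r)\cap W^{1,m(x)}_0(B_R\setminus\overline{B}_r)\big)$, promotes ``$u$ is critical among radial competitors'' to ``$u$ solves $(P_g)$ weakly in the open annulus $B_R\setminus\overline{B}_r$''. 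Thus $u$ solves $(P_g)$ weakly in $\Omega$ minus the two spheres $\partial B_r$ and $\partial B_R$. The hard part will be the transmission across these spheres: $u$ being bounded, the right-hand side of $(P_g)$ is bounded, so $u$ is $C^{1,\alpha}$ up to $\partial B_r$ and $\partial B_R$ from either side, and the sole obstruction to $u$ solving $(P_g)$ in all of $\Omega$ is the jump across these spheres of the conormal derivative of $u$ associated with $-\Delta_{p(x)}-\Delta_{m(x),a(x)}$. Using that $J'(u)$ annihilates $\mathcal E$, whose elements have radial — hence constant — traces on $\partial B_r$ and on $\partial B_R$, one only gets that this jump has zero mean on each sphere; turning this into a pointwise match, by exploiting that the coefficients are radial on the annular side of each interface (so that, there, the conormal derivative of $u$ is itself radial, hence constant), is the genuinely delicate point of the argument. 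Once the conormal jump is shown to vanish, $u$ is a weak solution of $(P_g)$ in $\Omega$; since $g(x,u(x))\equiv|u(x)|^{q(x)-2}u(x)$, $u$ is a nontrivial solution of $(P_2)$ in $E$.
\end{proof-sketch}
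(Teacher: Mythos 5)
Your overall architecture coincides with the paper's: restrict the energy to the subspace $X=\{v\in E:\ v(x)=v(|x|)\ \text{a.e. on } A_{R,r}\}$, obtain a mountain pass critical point $u_0$ of $I|_X$, and then show that $I'(u_0)$ annihilates, separately, test functions supported in $B_r$, in $\Omega\setminus\overline{B}_R$, and in the open annulus. For the annulus the paper does not invoke symmetric criticality: it observes that the radial weak formulation forces $u_0$ to satisfy the one-dimensional Euler equation classically on $(r,R)$, and integrating that ODE against an arbitrary $\psi\in E_0(A_{R,r})$ yields \eqref{B1}; your Kobayashi--Otani argument on the $O(N)$-invariant annulus is a legitimate alternative route to the same identity. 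The auxiliary frozen reaction $g$ that you import from Theorem \ref{t1} is not needed in this proof (the paper extends the Euler--Lagrange identity rather than the functional), but it is harmless.

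The genuine gap is exactly where you flag it: the transmission across the spheres $\partial B_r$ and $\partial B_R$. Your proposed resolution does not close it. Testing against elements of $X$, whose traces on each sphere are constant, only gives that the conormal jump has zero \emph{mean} over each sphere; knowing that the conormal derivative is constant on the annular side then tells you that the mean of the conormal derivative from the non-radial side equals that constant, but nothing prevents it from oscillating about this mean, so no pointwise match follows. Moreover this route presupposes $C^{1}$ regularity of $u_0$ up to each sphere from both sides, which is not available for free for the double-phase operator with variable exponents. The paper bypasses the interface entirely: for $v\in C_0^\infty(B_R)$ it tests \eqref{B0} and \eqref{B1} with $v_\epsilon=\phi((|x|-r)/\epsilon)\,v$, which splits into an admissible test function supported in $B_r$ plus one in $E_0(A_{R,r})$, and then lets $\epsilon\to0$. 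The only terms to control are the commutators $\int_\Omega|\nabla u_0|^{p(x)-2}\nabla u_0\cdot\nabla\phi_\epsilon\,v\,dx$ (and the $m(x)$ analogue), which are supported in the shell $A_\epsilon=\{\,||x|-r|\le2\epsilon\,\}$ and vanish in the limit because the gradient energy of $u_0$ does not concentrate on $\partial B_r$ (absolute continuity of the integral, via H\"older in the variable exponent spaces); see \eqref{E2}--\eqref{E5*}. The same device at $|x|=R$, followed by density, finishes the proof. You should replace your conormal-matching step by this cut-off/no-concentration argument.
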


{\it Proof.}
The  energy associated to problem $(P_2)$ is the following double-phase variational integral with variable exponents:
$$
I(u)=\int_{\Omega}\left(\frac{1}{p(x)}|\nabla u|^{p(x)}+\frac{a(x)}{m(x)}|\nabla u|^{m(x)}\right)\,dx-\int_{\Omega}\frac{1}{q(x)}|u|^{q(x)}\,dx.
$$
However, since we do not assume any growth condition on $q$ in the annulus $A_{R,r}$ $I$  is not well defined on the whole $E$. Keeping this in mind, we will restrict the function $I$ to the closed subspace $X \subset E $ given by
$$
X=\{u \in E \,:\,u(x)=u(|x|) \quad \mbox{a.e.} \quad x \in \overline{A}_{R,r} \}.
$$
Arguing as in Section 2, we observe that the compact embedding (\ref{EB1}) still holds in the present case, hence $I \in C^{1}(X,\mathbb{R})$ and
$$
I'(u)v=\int_{\Omega}(|\nabla u|^{p(x)-2}\nabla u\nabla v+a(x)|\nabla u|^{m(x)-2}\nabla u\nabla v)\,dx-\int_{\Omega}|u|^{q(x)-2}uv\,dx, \quad \forall u,v \in X.
$$
Moreover, $I$ also satisfies the $(PS)$ condition in $X$. Therefore, we can use the mountain pass theorem to get a nontrivial critical point $u_0 \in X$ of $I$, that is,
\begin{equation} \label{B0}
\int_{\Omega}(|\nabla u_0|^{p(x)-2}\nabla u\nabla v+a(x)|\nabla u_0|^{m(x)-2}\nabla u\nabla v)\,dx=\int_{\Omega}|u_0|^{q(x)-2}uv\,dx, \quad \forall v \in X.
\end{equation}

Now, we are going to show that $u_0$ is, in fact, a critical point of $I$. For this purpose  we cannot use the Palais principle used in Section 2, because $\Omega$ is not a ball.  Here, the trick is the following: for all $\varphi \in X_{0}(A_{R,r})=\{u \in X\,:\, u=0 \quad \mbox{on} \quad \partial (A_{R,r})\}$  we have
$$
\int_{A_{R,r}}(|\nabla u_0|^{p(x)-2}\nabla u\nabla v+a(x)|\nabla u_0|^{m(x)-2}\nabla u\nabla v)\,dx-\int_{A_{R,r}}|u_0|^{q(x)-2}u_0 \varphi\,dx=0.
$$

Since $p(x)=p(|x|)$, $q(x)=q(|x|)$ and $a(x)=a_0$, the regularity theory ensures that the function $$f(s)=s^{N-1}(|u'(s)|^{p(s)-2}u(s)+a_0|u'_0(s)|^{p(s)-2}u(s))$$ is of class $C^1$ in the interval $(r,R)$ and satisfies  the following equality, in the classical sense,
$$
(s^{N-1}(|u'_0(s)|^{p(s)-2}u(s)+a_0|u'_0(s)|^{p(s)-2}u(s)))'=s^{N-1}|u_0(s)|^{q(s)-2}u_0(s), \quad s \in (r,R).
$$
It follows that for all $\psi \in E_{0}(A_{R,r})$
\begin{equation} \label{B1}
\int_{A_{R,r}}(|\nabla u_0|^{p(x)-2}\nabla u_0\nabla \psi +a(x)|\nabla u_0|^{m(x)-2}\nabla u\nabla \psi)\,dx-\int_{A_{R,r}}|u_0|^{q(x)-2}u_0 \psi\,dx=0,
\end{equation}
where $E_{0}(A_{R,r})=\{u \in E\,:\, u=0 \quad \mbox{on} \quad \partial (A_{R,r})\}$.

Using the above information, we are ready to prove that
$$
\int_{\Omega}(|\nabla u_0|^{p(x)-2}\nabla u_0\nabla v+a(x)|\nabla u_0|^{m(x)-2}\nabla u\nabla v)\,dx-\int_{\Omega}|u_0|^{q(x)-2}u_0v\,dx=0, \quad \forall v \in E.
$$
In what follows, we consider an even function $\phi \in C^{\infty}(\mathbb{R},\mathbb{R})$ satisfying
$$
0 \leq \phi(s) \leq 1 \quad \forall s \in \mathbb{R}, \quad \phi(s)=0 \quad \forall s\in [-1,1] \quad \mbox{and} \quad  \phi(s)=1 \quad \forall s\in (-\infty,-2] \cup [2,+\infty).
$$

For $\epsilon>0$ small enough and $v \in C_0^{\infty}(B_R) \subset  E_0(B_R)=\{u \in E\,:\, u=0 \quad \mbox{on} \quad \partial (B_R)\}$, we set the function
$$
v_\epsilon(x)=\phi((|x|-r)/\epsilon)v(x), \quad \forall x \in \Omega.
$$
From the definition of $v_\epsilon$, we have that $v_\epsilon \in E_0(B_r(0)) \subset X$ and $v_\epsilon \in E_0(A_{R,r})$. Thus, by (\ref{B0}) and (\ref{B1}),
$$
\int_{B_r(0)}(|\nabla u_0|^{p(x)-2}\nabla u_0\nabla v_\epsilon+a(x)|\nabla u_0|^{m(x)-2}\nabla u\nabla v _\epsilon)\,dx-\int_{B_r(0)}|u_0|^{q(x)-2}u_0v_\epsilon\,dx=0
$$
and
$$
\int_{A_{R,r}}(|\nabla u_0|^{p(x)-2}\nabla u_0\nabla v_\epsilon+a(x)|\nabla u_0|^{m(x)-2}\nabla u\nabla v _\epsilon)\,dx-\int_{A_{R,r}}|u_0|^{q(x)-2}u_0v_\epsilon\,dx=0,
$$
leading to
$$
\int_{\Omega}(|\nabla u_0|^{p(x)-2}\nabla u_0\nabla v_\epsilon++a(x)|\nabla u_0|^{m(x)-2}\nabla u\nabla v _\epsilon)\,dx-\int_{\Omega}|u_0|^{q(x)-2}u_0v_\epsilon\,dx=0
$$
or equivalently
\begin{equation} \label{E2}
\begin{array}{l}
\displaystyle \int_{\Omega}|\nabla u_0|^{p(x)-2}\nabla u_0\nabla v \phi_{\epsilon}\,dx+\int_{\Omega}|\nabla u_0|^{p(x)-2}\nabla u_0\nabla  \phi_{\epsilon}v \,dx + \\
\mbox{} \\
 \displaystyle \int_{\Omega}|\nabla u_0|^{p(x)-2}\nabla u_0\nabla v \phi_{\epsilon}\,dx+\int_{\Omega}a(x)|\nabla u_0|^{m(x)-2}\nabla u_0\nabla  \phi_{\epsilon}v \,dx \\ \mbox{}\\
-\displaystyle \int_{B_R}|u_0|^{q(x)-2}u_0v\phi_\epsilon\,dx=0,
\end{array}
\end{equation}
where
$$
\phi_\epsilon(x)=\phi((|x|-r))/\epsilon), \quad \forall x \in \mathbb{R}^N.
$$
The Lebesgue dominated convergence theorem ensures that
\begin{equation} \label{E3}
\lim_{\epsilon \to 0}\int_{\Omega}|\nabla u_0|^{p(x)-2}\nabla u_0\nabla v \phi_{\epsilon}\,dx=\int_{\Omega}|\nabla u_0|^{p(x)-2}\nabla u_0\nabla v \,dx
\end{equation}
\begin{equation} \label{E3*}
\lim_{\epsilon \to 0}\int_{\Omega}a(x)|\nabla u_0|^{m(x)-2}\nabla u_0\nabla v \phi_{\epsilon}\,dx=\int_{\Omega}a(x)|\nabla u_0|^{m(x)-2}\nabla u_0\nabla v \,dx
\end{equation}
and
\begin{equation} \label{E4}
\lim_{\epsilon \to 0}\int_{\Omega}|u_0|^{q(x)-2}u_0 v \phi_{\epsilon}\,dx=\int_{\Omega}|u_0|^{q(x)-2} u_0 v \,dx.
\end{equation}
On the other hand, it is very important to observe that
$$
\int_{\Omega}|\nabla u_0|^{p(x)-2}\nabla u_0\nabla\phi_{\epsilon} v\,dx=\int_{A_{\epsilon}}|\nabla u_0|^{p(x)-2}\nabla u_0\nabla \phi_{\epsilon} v\,dx
$$
where
$$
A_{\epsilon}=\{x \in \Omega\,:\, ||x|-r|\leq 2 \epsilon \}.
$$
Thus, by the Lebesgue dominated convergence theorem,
$$
\lim_{\epsilon \to 0}\int_{A_{\epsilon}}|\nabla u_0|^{p(x)}\,dx=0,
$$
and so,
$$
\lim_{\epsilon \to 0}\|\nabla u_0\|_{p(x),A_{\epsilon}}=0.
$$
On the other hand, since $v \in C^{\infty}_0(B_R)$, we have that
$$
\int_{ \mathbb R^N }|\nabla \phi_{\epsilon}(x)|^{p_+}|v(x)|^{p_+}\,dx \leq |v|_{\infty}^{p_+}\int_{ \mathbb R^N }|\nabla \phi_{\epsilon}(x)|^{p_+}\,dx\leq |v|_{\infty}^{p_+}\epsilon^{N-p_+}\int_{A_\epsilon}\left|\phi'\left(|z|-\frac{r}{\epsilon}\right)\right|^{p_+}\,dz
$$
then
$$
\int_{ \mathbb R^N }|\nabla \phi_{\epsilon}(x)|^{p_+}|v(x)|^{p_+}\,dx \leq C_N |\phi'|_{\infty}^{p_+}|v|_{\infty}^{p_+}\epsilon^{N-p_+}((r+2\epsilon)^N-(r-2\epsilon)^N) \to 0 \quad \mbox{as} \quad \epsilon \to 0.
$$
A similar argument works to prove that
$$
\int_{ \mathbb R^N }|\nabla \phi_{\epsilon}(x)|^{p_-}|v(x)|^{p_-}\,dx \to 0 \quad \mbox{as} \quad \epsilon \to 0.
$$
From this,
$$
\||\nabla \phi_{\epsilon}||v|\|_{p(x)} \to 0 \quad \mbox{as} \quad \epsilon \to 0.
$$
Hence, by using again H\"older's inequality, we get
\begin{equation} \label{E5}
\lim_{\epsilon \to 0}\int_{\Omega}|\nabla u_0|^{p(x)-2}\nabla u_0\nabla  \phi_{\epsilon}v \,dx=0.
\end{equation}
A similar argument gives
\begin{equation} \label{E5*}
\lim_{\epsilon \to 0}\int_{\Omega}a(x)|\nabla u_0|^{m(x)-2}\nabla u_0\nabla  \phi_{\epsilon}v \,dx=0.
\end{equation}

Taking the limit of $\epsilon \to 0$ in (\ref{E2}) and using (\ref{E3})-(\ref{E5*}), we obtain
for all $v \in C^{\infty}_0(B_R)$
\begin{equation*} \label{E06}
\int_{\Omega}(|\nabla u_0|^{p(x)-2}\nabla u_0\nabla v+a(x)|\nabla u_0|^{m(x)-2}\nabla u_0\nabla v)\,dx-\int_{\Omega}|u_0|^{q(x)-2}u_0v\,dx=0,
\end{equation*}
then by density
\begin{equation} \label{E6}
\int_{\Omega}(|\nabla u_0|^{p(x)-2}\nabla u_0\nabla v+a(x)|\nabla u_0|^{m(x)-2}\nabla u_0\nabla v)\,dx-\int_{\Omega}|u_0|^{q(x)-2}u_0v\,dx=0, \quad \forall v \in E_0(B_R).
\end{equation}

Now, we show that the above equality holds for any $w \in C_0^{\infty}(\Omega) \subset E$. The idea is as above, we set the function
$$
w_\epsilon(x)=\phi((|x|-R)/\epsilon)w(x), \quad \forall x \in \Omega,
$$
which belongs to $E_0(B_R)$ and $E_0(\Omega \setminus \overline{B}_{R}) \subset E$. Since $w_{\epsilon}|_{\Omega \setminus \overline{B}_{R}} \in E$, it follows that
$$
\int_{\Omega \setminus \overline{B}_{R}}(|\nabla u_0|^{p(x)-2}\nabla u_0\nabla w_\epsilon+|\nabla u_0|^{m(x)-2}\nabla u_0\nabla w_\epsilon) \,dx-\int_{\Omega \setminus \overline{B}_{R}}|u_0|^{q(x)-2}u_0w_{\epsilon}\,dx=0.
$$

On the other hand, as $w_{\epsilon}|_{B_{R}} \in E_0(B_R)$, by (\ref{E6}),
$$
\int_{B_{R}}(|\nabla u_0|^{p(x)-2}\nabla u_0\nabla w_\epsilon+a(x)|\nabla u_0|^{m(x)-2}\nabla u_0\nabla w_\epsilon) \,dx-\int_{B_{R}}|u_0|^{q(x)-2}u_0w_{\epsilon}\,dx=0.
$$

Combining the last two equalities we obtain
\begin{equation} \label{E7}
\int_{\Omega}(|\nabla u_0|^{p(x)-2}\nabla u_0\nabla w_\epsilon+a(x)|\nabla u_0|^{m(x)-2}\nabla u_0\nabla w_\epsilon) \,dx-\int_{\Omega}|u_0|^{q(x)-2}u_0w_{\epsilon}\,dx=0.
\end{equation}

Now, the same argument used for function $v_{\epsilon}$ works  to conclude that taking the limit  $\epsilon \to 0$ in (\ref{E7}) we obtain
$$
\int_{\Omega}(|\nabla u_0|^{p(x)-2}\nabla u_0\nabla w+a(x)|\nabla u_0|^{m(x)-2}\nabla u_0\nabla w ) \,dx-\int_{\Omega}|u_0|^{q(x)-2}u_0w\,dx=0,\quad \forall w \in C_0^{\infty}(\Omega).
$$
Again by density, we have that
$$
\int_{\Omega}(|\nabla u_0|^{p(x)-2}\nabla u_0\nabla w+a(x)|\nabla u_0|^{m(x)-2}\nabla u_0\nabla w ) \,dx-\int_{\Omega}|u_0|^{q(x)-2}u_0w\,dx=0,\quad \forall w \in E,
$$
showing that $u_0$ is a nontrivial solution of $(P_2)$. \qed

\section{ Problem 3: The case where $q$ vanishes close to the boundary }

In this section, we  study the existence of nontrivial solutions for the following class of problems:
$$
\left\{
\begin{array}{l}
-\Delta_{p(x)}u-\Delta_{m(x),a(x)}u=\lambda|u|^{q(x)-2}u \quad \mbox{in} \quad \Omega, \\
u=0 \quad \mbox{on} \quad \partial \Omega,
\end{array}
\right.
\eqno{(P_3)}
$$
where $\lambda>0$ is a parameter and $\Omega$ is a bounded domain with smooth boundary. We assume that there exist positive numbers $r<R$ such that $B_R(0) \subset \Omega$,
$$
A_{R,r} \subset \Omega_{\delta} \quad \mbox{and} \quad a(x)=a_0 \quad \forall x \in A_{R,r},
$$
where
$$
\Omega_\delta=\{x \in \Omega  \,:\, {\rm dist}\,(x,\partial \Omega) > \delta \}.
$$

Related to the functions $p,q,m,a:\overline{\Omega} \to \mathbb{R}$, we assume that they are continuous and satisfy the following conditions:
$$
\left\{
\begin{array}{l}
1<p_-=\min_{x \in \overline{\Omega} }p(x) \leq \max_{x \in \overline{\Omega}}p(x)=p_+<N, \\
1<m_-=\min_{x \in \overline{\Omega} }m(x) \leq \max_{x \in \overline{\Omega} }m(x)=m_+<N.
\end{array}
\right.
\eqno(H9)
$$
$$
\max\{p_+,m_+\}<q^{A}_{-}=\min_{x \in \overline{\Omega}_\delta \setminus A_{R,r} }q(x)\leq q^{A}_{+}=\max_{x \in \overline{\Omega}_\delta \setminus A_{R,r} }q(x) < \min_{x \in \overline{\Omega}}p^*(x).
\eqno(H10)
$$
$$
0 \leq a(x) \leq L, \quad \forall x \in \overline{\Omega} \eqno(H11)
$$
and
$$
q(x) \geq 0 \quad \forall x \in \overline{\Omega} \quad \mbox{and} \quad \lim_{ {\rm dist}\,(x,\partial \Omega) \to 0}q(x)=0. \eqno(H12)
$$

\begin{theorem}\label{t3}
Assume that hypotheses $(H9)-(H12)$ are fulfilled. Then
there exists $\lambda^*>0$ such that for all $\lambda \in (0, \lambda^*)$
problem $(P_3)$
has at least two nontrivial solutions in $E$.
\end{theorem}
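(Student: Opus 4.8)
\medskip
\noindent\textbf{Plan of proof.}\ The plan is to combine the gluing device from the proof of Theorem \ref{t2}, which absorbs the possibly supercritical behaviour of $q$ on the annulus $A_{R,r}$, with a concave--convex (Ambrosetti--Brezis--Cerami type) analysis of the parametrized functional
$$
I_\lambda(u)=\int_{\Omega}\Big(\frac{1}{p(x)}|\nabla u|^{p(x)}+\frac{a(x)}{m(x)}|\nabla u|^{m(x)}\Big)\,dx-\lambda\int_{\Omega}\frac{1}{q(x)}|u|^{q(x)}\,dx ,
$$
the structural point being that $(H12)$ forces the reaction $\lambda|u|^{q(x)-2}u$ to behave like a \emph{sublinear/singular} term in the boundary layer $\Omega\setminus\Omega_\delta$ (where $q\to 0$), while by $(H10)$ it is strictly \emph{superlinear and subcritical} on $\overline\Omega_\delta\setminus A_{R,r}$. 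As in Sections 2 and 3, $I_\lambda$ is not finite on all of $E$, so first I would restrict it to the closed subspace
$$
X=\{u\in E:\,u(x)=u(|x|)\ \text{a.e. in}\ \overline A_{R,r}\} .
$$
Since $A_{R,r}\subset\Omega_\delta$ lies away from $\partial\Omega$, the Strauss-type compact embedding of Section 2 gives $X\hookrightarrow C(\overline A_{R,r})$, hence, using $(H10)$ and the continuity of $q$, the compact embedding $X\hookrightarrow L^{q(x)}(\Omega)$ as in (\ref{EB1}). One then checks $I_\lambda\in C^{1}(X,\RR)$; the delicate point is the behaviour near $\partial\Omega$, where $1/q(x)\to\infty$, handled by combining $(H12)$ with the boundary decay of functions in $E$ (a Hardy-type inequality for variable exponents), or equivalently by working with the renormalized reaction $\int_\Omega\frac{|u|^{q(x)}-1}{q(x)}\,dx$, which has the same derivative.

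I would then establish, for small $\lambda$, the following geometry of $I_\lambda|_X$. By the elementary modular estimates (cf. Proposition \ref{p1}), the double-phase part $\Phi(u):=\int_\Omega(\frac{1}{p(x)}|\nabla u|^{p(x)}+\frac{a(x)}{m(x)}|\nabla u|^{m(x)})\,dx$ satisfies $\Phi(u)\ge c\,\|u\|^{\max\{p_+,m_+\}}$ whenever $\|u\|\le 1$. Choose $\rho\in(0,1)$ so small that on $\overline B_\rho$ the part of the reaction coming from $\overline\Omega_\delta\setminus A_{R,r}$ (where $q\ge q^A_->\max\{p_+,m_+\}$) is absorbed into $\frac12\Phi$; the complementary part of $\lambda\int\frac{1}{q(x)}|u|^{q(x)}$ (over $A_{R,r}$, where functions in $X$ are bounded, and over the boundary layer) is bounded on $\overline B_\rho$ by some $\lambda C_\rho$. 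Hence $I_\lambda(u)\ge\frac12\Phi(u)-\lambda C_\rho$ on $\overline B_\rho$, so, shrinking $\lambda^*$ if necessary, $I_\lambda\ge\alpha:=\frac14 c\,\rho^{\max\{p_+,m_+\}}>0$ on $\partial B_\rho$, while testing with a function of small norm concentrated deep in the boundary layer --- where $q$ is tiny, so $|u|^{q(x)}\approx 1$ and $\int\frac{dx}{q(x)}$ is large --- gives $\inf_{\overline B_\rho}I_\lambda<0=I_\lambda(0)$. Finally, picking a nonzero $e_0\in C_0^\infty(\Omega_\delta\setminus A_{R,r})$ (which lies in $X$) and using $q\ge q^A_->\max\{p_+,m_+\}$ on its support, $I_\lambda(t e_0)\le C_1 t^{\max\{p_+,m_+\}}-\lambda C_2 t^{q^A_-}\to-\infty$ as $t\to+\infty$, so there is $e=t_0 e_0$ with $\|e\|>\rho$ and $I_\lambda(e)<0$.

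From this geometry the two solutions follow by standard critical point theory, with the compactness supplied by (\ref{EB1}). For the first one I would apply Ekeland's variational principle on the complete set $\overline B_\rho$: since $\inf_{\overline B_\rho}I_\lambda<0<\inf_{\partial B_\rho}I_\lambda$, a minimizing sequence $(u_n)$ stays in the open ball, is therefore bounded, and --- using the strong convergence $u_n\to u_1$ in $L^{q(x)}(\Omega)$ from (\ref{EB1}) together with the $(S_+)$ property of the double-phase operator --- converges strongly in $X$ to a critical point $u_1$ with $I_\lambda(u_1)=\inf_{\overline B_\rho}I_\lambda<0$; in particular $u_1\neq 0$. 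For the second one I would apply the mountain pass theorem: $I_\lambda|_X$ has the geometry above with mountain pass level $c_\lambda\ge\alpha>0$, and $(PS)_{c_\lambda}$ holds because a Rabinowitz-type estimate with $\theta=q^A_-$ (so that $\frac{1}{q(x)}-\frac1\theta\le 0$ on $\overline\Omega_\delta\setminus A_{R,r}$, the annular and boundary-layer contributions being of lower order) bounds the Palais--Smale sequences, after which (\ref{EB1}) and $(S_+)$ give strong convergence; this produces $u_2\in X$ with $I_\lambda(u_2)=c_\lambda>0$. Since $I_\lambda(u_1)<0<I_\lambda(u_2)$, the two critical points of $I_\lambda|_X$ are distinct and nontrivial.

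It remains to upgrade $u_1$ and $u_2$ to solutions of $(P_3)$ in $E$, and here I would simply repeat the argument of Theorem \ref{t2}: since $a$, $p$ and $q$ have radial structure on $A_{R,r}$ (with $a\equiv a_0$), one-dimensional regularity theory gives the weak identity against every $\psi\in E_0(A_{R,r})$, and the cut-off functions $\phi((|x|-r)/\epsilon)$ and $\phi((|x|-R)/\epsilon)$ allow the passage from test functions in $X$ to arbitrary $v\in C_0^\infty(\Omega)$, the terms involving $\nabla\phi_\epsilon$ vanishing as $\epsilon\to 0$ exactly as in Section 3; density then yields the weak formulation of $(P_3)$ for all $v\in E$. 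I expect the main obstacle to be the first technical step: giving rigorous meaning to $I_\lambda$ --- and carrying out the entire mountain pass / Ekeland analysis --- in the collar $\Omega\setminus\Omega_\delta$ where $q$ degenerates to $0$, and checking that the concave--convex geometry and the compactness hold \emph{uniformly} in $\lambda\in(0,\lambda^*)$. Once that is settled, the remaining arguments are routine combinations of critical point theory with the machinery of Sections 2 and 3.
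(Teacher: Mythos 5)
Your architecture coincides with the paper's own proof: restrict $I_\lambda$ to $X$, obtain a negative-energy critical point by Ekeland's variational principle in a small ball (your small-sphere positivity and negative infimum via a test function concentrated where $q$ is small are exactly Lemmas \ref{LEMA1} and \ref{LEMA2}), obtain a second critical point by the mountain pass theorem using a test function supported in $\Omega_\delta\setminus \overline{A}_{R,r}$ (Lemma \ref{LEMA3}), and finally upgrade both critical points of $I_\lambda|_X$ to critical points on $E$ by the cut-off/gluing argument of Section 3. Your observation that $\int_\Omega\frac{\lambda}{q(x)}|u|^{q(x)}\,dx$ requires care where $q\to 0$ is legitimate and is a point the paper passes over in silence.

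The genuine gap is in your compactness step for the mountain pass solution. You assert that a Rabinowitz-type estimate with $\theta=q^A_-$ bounds the Palais--Smale sequences because ``the annular and boundary-layer contributions are of lower order.'' The annular contribution need not be of lower order: on $A_{R,r}$ the only hypothesis is $q\ge 0$, so the set $S=\{x\in A_{R,r}:\ \min\{p_-,m_-\}<q(x)<\theta\}$ may well have positive measure (continuity of $q$ forces $q\ge q^A_-$ only on $\partial B_r$ and $\partial B_R$, not inside the annulus). On $S$ the term $-\lambda\int_{S}\bigl(\frac{1}{q(x)}-\frac{1}{\theta}\bigr)|u_n|^{q(x)}\,dx$ occurring in $I_\lambda(u_n)-\frac{1}{\theta}I'_\lambda(u_n)u_n$ is negative and, using only the Strauss bound $\|u_n\|_{L^\infty(A_{R,r})}\le C\|u_n\|$, is of size $\lambda\|u_n\|^{s}$ for some $s\in(\min\{p_-,m_-\},\theta)$, while the coercive side of the inequality controls only $c\,\|u_n\|^{\min\{p_-,m_-\}}$. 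The resulting inequality $c\,\|u_n\|^{\min\{p_-,m_-\}}\le C(1+\|u_n\|)+\lambda C'\|u_n\|^{s}$ yields no bound, and smallness of $\lambda$ does not help as $\|u_n\|\to\infty$. This is precisely why the paper does not attempt an Ambrosetti--Rabinowitz-type bound: it invokes Jeanjean's monotonicity trick (\cite[Theorem 1.1]{J1}) to produce a \emph{bounded} $(PS)_{c_\lambda}$ sequence for \emph{almost every} $\lambda\in(0,\lambda^*)$, and obtains the second solution only for those $\lambda$. To close your argument you must either add a hypothesis controlling $q$ from below on $A_{R,r}$ (so that the subtracted reaction term has a favourable sign or is of order at most $\|u\|^{\min\{p_-,m_-\}}$ there), or replace your PS-boundedness claim by Jeanjean's theorem as the paper does, accepting the ``almost every $\lambda$'' conclusion. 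Everything else in your proposal matches the paper's proof.
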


As in the previous section, the natural candidate to be the energy associated to problem $(P_3)$ is  the following double-phase functional with variable exponents:
$$
I(u)=\int_{\Omega}\left(\frac{1}{p(x)}|\nabla u|^{p(x)}+\frac{a(x)}{m(x)}|\nabla u|^{m(x)}\right)\,dx-\int_{\Omega}\frac{\lambda}{q(x)}|u|^{q(x)}\,dx.
$$
Since we do not assume any growth condition on $q$ in the annulus $A_{R,r}$, this functional
 is not well defined in the whole space $E$. That is why we restrict the functional $I$ to the closed subspace $X \subset E $ given by
$$
X=\{u \in E \,:\,u(x)=u(|x|) \quad \mbox{a.e.} \quad x \in \overline{A}_{R,r} \}.
$$
Arguing as in Section 2, we deduce that the compact embedding (\ref{EB1}) still holds in the present case,  hence $I \in C^{1}(X,\mathbb{R})$ and
$$
I'(u)v=\int_{\Omega}(|\nabla u|^{p(x)-2}\nabla u\nabla v+a(x)|\nabla u|^{m(x)-2}\nabla u\nabla v)\,dx-\lambda \int_{\Omega}|u|^{q(x)-2}uv\,dx, \quad \forall u,v \in X.
$$
Moreover, $I$ also satisfies the $(PS)$ condition in $X$.

\begin{lemma} \label{LEMA1} Given $\tau>0$, there are $\rho=\rho(\tau)>0$ and $\lambda^*=\lambda^*(\tau)$ such that
$$
I_\lambda(u)\geq \rho \quad \mbox{for} \quad \|u\|=\tau \quad \mbox{and} \quad \lambda \in (0,\lambda^*).
$$	
\end{lemma}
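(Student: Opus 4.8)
This is the standard ``mountain–pass geometry near the origin'' estimate for the restricted functional, written as $I_\lambda=A-\lambda\Psi$ on $X$, where $A(u)=\int_\Omega\big(\frac{1}{p(x)}|\nabla u|^{p(x)}+\frac{a(x)}{m(x)}|\nabla u|^{m(x)}\big)\,dx$ is the double–phase part and $\Psi(u)=\int_\Omega\frac{1}{q(x)}|u|^{q(x)}\,dx$. The plan is to bound $A$ from below on the sphere $\{\|u\|=\tau\}$ by a positive constant $c_1(\tau)$, to bound $\Psi$ from above there by a finite constant $M(\tau)$, and then to take $\lambda$ so small that $c_1(\tau)-\lambda M(\tau)$ is still positive.

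\emph{Step 1: lower bound for $A$.} Since $p(x)\leq p_+$, $m(x)\leq m_+$ on $\overline\Omega$ and $a\geq0$, we have $A(u)\geq \frac{1}{p_+}\int_\Omega|\nabla u|^{p(x)}\,dx+\frac{1}{m_+}\int_\Omega a(x)|\nabla u|^{m(x)}\,dx$. The modular–norm inequalities of Proposition \ref{p1} and Remark \ref{r1} (applied to $|\nabla u|$ in $L^{p(x)}$), together with their analogue for the weighted modular $u\mapsto\int_\Omega a(x)|\nabla u|^{m(x)}\,dx$ against $\|\nabla u\|_{m(x),a(x)}$, give $\int_\Omega|\nabla u|^{p(x)}\,dx\geq\min\{\|\nabla u\|_{p(x)}^{p_-},\|\nabla u\|_{p(x)}^{p_+}\}$ and the corresponding bound for the $m$–modular. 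As $t\mapsto\min\{t^{p_-},t^{p_+}\}$ is increasing and $\|u\|=\|\nabla u\|_{p(x)}+\|\nabla u\|_{m(x),a(x)}=\tau$ forces at least one summand to be $\geq\tau/2$, discarding the other (nonnegative) term yields $A(u)\geq c_1(\tau)$ with
\[
c_1(\tau):=\min\Big\{\tfrac{1}{p_+}\min\{(\tau/2)^{p_-},(\tau/2)^{p_+}\},\ \tfrac{1}{m_+}\min\{(\tau/2)^{m_-},(\tau/2)^{m_+}\}\Big\}>0.
\]

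\emph{Step 2: upper bound for $\Psi$ and conclusion.} By the continuous embedding $X\hookrightarrow L^{q(x)}(\Omega)$ recalled just before the lemma, $|u|_{q(x)}\leq C_*\|u\|=C_*\tau$. To absorb the weight $1/q(x)$, split $\Omega=\{q\geq q_0\}\cup\{q<q_0\}$ with $q_0\in(0,1)$ fixed: on $\{q\geq q_0\}$ bound $\Psi$ by $q_0^{-1}\int_\Omega|u|^{q(x)}\,dx\leq q_0^{-1}\max\{|u|_{q(x)}^{q_-},|u|_{q(x)}^{q_+}\}$ (Remark \ref{r1}); on $\{q<q_0\}$, which by $(H12)$ sits in a neighbourhood of $\partial\Omega$, use $|u|^{q(x)}\leq 1+|u|$, a Hölder estimate against $1/q$, the integrability of $1/q$ near $\partial\Omega$ and the embedding $X\hookrightarrow L^{s}(\Omega)$ for a suitable $s$. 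This produces $\Psi(u)\leq M(\tau)<\infty$ depending only on $\tau$. Consequently, for $\|u\|=\tau$,
\[
I_\lambda(u)=A(u)-\lambda\Psi(u)\geq c_1(\tau)-\lambda M(\tau),
\]
and it suffices to take $\rho=\rho(\tau):=c_1(\tau)/2$ and $\lambda^*=\lambda^*(\tau):=c_1(\tau)/(2M(\tau))$.

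\emph{Main obstacle.} The only genuinely delicate point is Step 2, namely controlling $1/q(x)$ on the part of $\Omega$ where $q$ approaches $0$, i.e. near $\partial\Omega$: there one must use that $|u|^{q(x)}$ grows no faster than $1+|u|$ and exploit precisely the integrability of $1/q$ near the boundary that already underlies the well–posedness and $C^1$ regularity of $I_\lambda$ on $X$. Everything else — the modular–norm dichotomy of Proposition \ref{p1}, the splitting of $\|u\|=\tau$ into its two norm components, and the continuity (not compactness, which is needed only for $(PS)$) of $X\hookrightarrow L^{q(x)}(\Omega)$ — is routine; one just has to remember that which power of $\tau$ appears in $c_1(\tau)$ and $M(\tau)$ depends on whether $\tau\lessgtr1$.
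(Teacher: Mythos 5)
Your argument is correct and follows essentially the same route as the paper: bound the double-phase part from below by a positive $\tau$-dependent constant on the sphere $\|u\|=\tau$ via the modular--norm inequalities, bound the reaction term from above by a constant $C_\tau$ via the embedding $X\hookrightarrow L^{q(x)}(\Omega)$, and then take $\lambda^*$ proportional to the ratio of the two. The only difference is that you are more careful on two points the paper's proof silently glosses over: the split of $\|u\|=\tau$ between the two norm components $\|\nabla u\|_{p(x)}$ and $\|\nabla u\|_{m(x),a(x)}$, and the weight $1/q(x)$, which blows up near $\partial\Omega$ by $(H12)$ and which the paper simply drops when it writes $\int_\Omega|u|^{q(x)}\,dx\leq C\max\{\|u\|^{q_+},\|u\|^{q_-}\}$; your remark that the needed control of $1/q$ near the boundary is already implicit in the well-posedness and $C^1$ regularity of $I_\lambda$ on $X$ is an honest statement of what is actually being assumed.
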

\begin{proof} By our assumptions we know that there exists $C>0$ such that
$$
\int_{\Omega}|u|^{q(x)}\,dx \leq C\max\{\|u\|^{q_+},\|u\|^{q_-}\}.
$$	
From this, for  $\|u\|=\tau$,
$$
\int_{\Omega}|u|^{q(x)}\,dx \leq C\max\{\tau^{q_+},\tau^{q_-}\}=C_\tau,
$$	
and so,
$$
I_\lambda(u) \geq \min\{\tau^{p_+},\tau^{p_-}\}-\lambda C_\tau, \quad \|u\|=\tau.
$$
Setting $\lambda^*=\frac{\min\{\tau^{p_+},\tau^{p_-}\}}{2C_\tau}$ and $\lambda \in (0,\lambda^*)$, we get
$$
I_\lambda(u) \geq \frac{\min\{\tau^{p_+},\tau^{p_-}\}}{2}=\rho(\tau)>0, \quad \|u\|=\tau.
$$
The proof of this auxiliary result is now concluded.
\end{proof}

\begin{lemma} \label{LEMA2}
Setting $A_\lambda=\inf\{I_\lambda(u)\,:\,\|u\| \leq \tau\}$, we have that $A_\lambda<0$ for all $\lambda \in (0,\lambda^*)$. 	
\end{lemma}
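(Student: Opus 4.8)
The plan is to produce, for every $\lambda>0$ (in particular for $\lambda\in(0,\lambda^*)$), a single admissible function $u\in X$ with $\|u\|\leq\tau$ and $I_\lambda(u)<0$; then $A_\lambda\leq I_\lambda(u)<0$. Since
$I_\lambda(u)=\int_{\Omega}\big(\tfrac{1}{p(x)}|\nabla u|^{p(x)}+\tfrac{a(x)}{m(x)}|\nabla u|^{m(x)}\big)\,dx-\lambda\int_{\Omega}\tfrac{1}{q(x)}|u|^{q(x)}\,dx$,
the gradient part of $I_\lambda(tu)$ scales, for $t\in(0,1)$, like $t^{\min\{p_-,m_-\}}$, while on the support of $u$ the reaction part scales like $t^{\max_{\mathrm{supp}\,u}q}$. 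Hence the whole point is to localize $u$ in a region where $q$ is \emph{strictly below} $\min\{p_-,m_-\}$ — which is precisely where hypothesis $(H12)$ becomes useful.

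First I would fix the region and the test function. Set $\sigma:=\min\{p_-,m_-\}>1$. By $(H12)$ there is $\delta'\in(0,\delta)$ such that $q(x)<\sigma$ on the collar $U:=\{x\in\Omega:\mathrm{dist}(x,\partial\Omega)<\delta'\}$; since $A_{R,r}\subset\Omega_\delta$, this collar is disjoint from $\overline{A}_{R,r}$. Because $q\not\equiv0$ (by $(H10)$, $q\geq q^A_->0$ on $\overline{\Omega}_{\delta}\setminus A_{R,r}$) and $q$ decays to $0$ at $\partial\Omega$, one may pick $x_0\in U$ with $0<q(x_0)<\sigma$; by continuity there is a ball $B=B_\eta(x_0)$ with $\overline{B}\subset U$ and $0<q(x)\leq q^*<\sigma$ for all $x\in\overline{B}$. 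I would then take any $\psi\in C_0^\infty(B)$ with $\psi\not\equiv0$; since $\psi\equiv0$ on $\overline{A}_{R,r}$ (hence trivially radial there) and $a\leq L$, we have $\psi\in X$.

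Next, for $t\in(0,1)$, using $p(x)\geq p_-\geq\sigma$, $m(x)\geq m_-\geq\sigma$ and $q(x)\leq q^*$ on $\overline{B}$, I would estimate
\[
I_\lambda(t\psi)\ \leq\ C\,t^{\sigma}-\lambda D\,t^{q^*},\qquad
C:=\int_B\Big(\tfrac{|\nabla\psi|^{p(x)}}{p(x)}+\tfrac{a(x)|\nabla\psi|^{m(x)}}{m(x)}\Big)\,dx<\infty,\quad
D:=\int_B\tfrac{|\psi|^{q(x)}}{q(x)}\,dx\in(0,\infty),
\]
the finiteness and strict positivity of $D$ following from $0<q\leq q^*$ on $\overline{B}$ and $\psi\not\equiv0$. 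Factoring $t^{q^*}$ out and invoking $q^*<\sigma$, the factor $C\,t^{\sigma-q^*}-\lambda D$ tends to $-\lambda D<0$ as $t\to0^+$, so $I_\lambda(t\psi)<0$ for all sufficiently small $t>0$. Choosing in addition $t<\tau/\|\psi\|$, and recalling that the norm is positively $1$-homogeneous so $\|t\psi\|=t\|\psi\|\leq\tau$, the function $u:=t\psi$ is admissible and yields $A_\lambda\leq I_\lambda(u)<0$ — uniformly in $\lambda>0$.

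The only genuinely delicate step is the first one: the support of the test function must simultaneously lie \emph{outside} the possibly supercritical, non-radial annulus $A_{R,r}$ (so that $\psi\in X$, where $I_\lambda$ is well defined and of class $C^1$) and \emph{inside} a region where $q(x)<\min\{p_-,m_-\}$ with $q$ still strictly positive (so that the weight $1/q(x)$ is harmless). Hypothesis $(H12)$ — the vanishing of $q$ as $\mathrm{dist}(x,\partial\Omega)\to0$ — is exactly what guarantees such a region exists in a thin collar just inside $\partial\Omega$, and once $\psi$ is fixed everything else is the routine ``lowest-exponent term dominates as $\|u\|\to0$'' estimate, the lowest exponent here being that of the reaction term.
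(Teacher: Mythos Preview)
Your proof is correct and follows essentially the same approach as the paper: localize a test function $\psi\in C_0^\infty$ in a small ball near $\partial\Omega$ where, thanks to $(H12)$, $\max q<\min\{p_-,m_-\}$, and then let the scaling $t\to0^+$ make the reaction term dominate. Your write-up is in fact more careful than the paper's on three points the authors leave implicit: that the chosen ball is disjoint from $\overline{A}_{R,r}$ (so $\psi\in X$), that $q>0$ on the support (so $D=\int_B|\psi|^{q(x)}/q(x)\,dx$ is finite), and that $t$ can be taken small enough to also guarantee $\|t\psi\|\leq\tau$.
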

\begin{proof} Let $\phi \in C_0^{\infty}(B_\alpha(x_0))$ such that
$$
q_1=\max_{x \in \overline{B}_{\alpha}(x_0)}q(x)<\min\{p_-,m_-\}.
$$	
Here, we are using the fact that $\displaystyle \lim_{ dist(x,\partial \Omega) \to 0}q(x)=0.$ If $t>0$ is small enough, a simple computation gives
$$
I_{\lambda}(t\phi) \leq \int_{\Omega}\left(\frac{t^{p_-}}{p(x)}|\nabla \phi|^{p(x)}+\frac{t^{m_-}a(x)}{m(x)}|\nabla \phi|^{m(x)}\right)\,dx-t^{q_1}\int_{\Omega}\frac{\lambda}{q(x)}|\phi|^{q(x)}\,dx.
$$
It follows that
$$
I_{\lambda}(t\phi)<0 \quad \mbox{for} \quad t\approx 0^+,
$$	
showing the desired result.
\end{proof}

The last two lemmas permit to apply the Ekeland variational principle to conclude that  there exists $u_\lambda \in X$ such that
$$
I'_\lambda(u_\lambda)v=0, \quad \forall v \in X  \quad \mbox{and} \quad I_\lambda(u_\lambda)=A_\lambda<0.
$$
Repeating the same arguments of the last section, it follows that $u_\lambda$ is a critical point of $I_\lambda$ in $E$ for all $\lambda \in (0, \lambda^*)$.

\begin{lemma} \label{LEMA3} Fixed $\phi \in C_0^{\infty}({\Omega}_\delta \setminus \overline{A}_{R,r})$, we have that
$$
I_\lambda(t \phi) \to -\infty \quad \mbox{as} \quad t \to +\infty.
$$	
\end{lemma}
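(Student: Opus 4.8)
The plan is to exploit that the test function $\phi$ is supported away from the annulus $A_{R,r}$, precisely in the region $\Omega_\delta\setminus\overline{A}_{R,r}$ where hypothesis $(H10)$ forces $q$ to be strictly superhomogeneous with respect to \emph{both} $p$ and $m$. Since $\mathrm{supp}\,\phi\subset\overline{\Omega}_\delta\setminus A_{R,r}$, we have $q^{A}_{-}\le q(x)\le q^{A}_{+}$ on $\mathrm{supp}\,\phi$, and $(H10)$ gives $q^{A}_{-}>\max\{p_+,m_+\}$. Note also that $\phi$ vanishes on $\overline{A}_{R,r}$, so $\phi\in X$ and $t\phi$ lies in the set on which $I_\lambda$ is defined and $C^1$. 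The whole argument then reduces to one upper estimate for $I_\lambda(t\phi)$ in which the negative reaction term, of order $t^{q^{A}_{-}}$, eventually beats the positive double-phase gradient term, of order at most $t^{\max\{p_+,m_+\}}$.

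First I would bound the gradient part of $I_\lambda(t\phi)$ from above. For $t\ge 1$ one has $t^{p(x)}\le t^{p_+}$ and $t^{m(x)}\le t^{m_+}$ pointwise, so, using that $\phi$ is smooth with compact support (hence $|\nabla\phi|$ is bounded and $\int_\Omega|\nabla\phi|^{p(x)}dx$, $\int_\Omega|\nabla\phi|^{m(x)}dx$ are finite) together with $0\le a\le L$,
$$\int_\Omega\Big(\frac{1}{p(x)}|\nabla(t\phi)|^{p(x)}+\frac{a(x)}{m(x)}|\nabla(t\phi)|^{m(x)}\Big)\,dx\le C_1\,t^{p_+}+C_2\,t^{m_+},$$
with $C_1,C_2>0$ depending only on $\phi$, $p_-$, $m_-$ and $L$. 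Next I would bound the reaction term from below: on $\mathrm{supp}\,\phi$ we have $q(x)\ge q^{A}_{-}>1$, so $|\phi|^{q(x)}$ is well defined, and since $\phi\not\equiv 0$ the constant $c_0:=\int_\Omega|\phi|^{q(x)}\,dx$ is strictly positive; for $t\ge 1$, $t^{q(x)}\ge t^{q^{A}_{-}}$ on the support, whence
$$\lambda\int_\Omega\frac{1}{q(x)}|t\phi|^{q(x)}\,dx\ge\frac{\lambda}{q^{A}_{+}}\,t^{q^{A}_{-}}\int_\Omega|\phi|^{q(x)}\,dx=C_3\,t^{q^{A}_{-}},\qquad C_3:=\frac{\lambda c_0}{q^{A}_{+}}>0.$$
Combining the two estimates gives $I_\lambda(t\phi)\le C_1t^{p_+}+C_2t^{m_+}-C_3t^{q^{A}_{-}}$ for all $t\ge 1$, and since $q^{A}_{-}>\max\{p_+,m_+\}$ the right-hand side tends to $-\infty$ as $t\to+\infty$, which is the assertion.

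There is no genuine obstacle here; the two points that actually use the hypotheses are the strict inequality $q^{A}_{-}>\max\{p_+,m_+\}$ from $(H10)$, which guarantees that the reaction term is of strictly higher order than the double-phase energy along the ray $t\mapsto t\phi$, and the positivity $c_0>0$, which holds because $\phi$ is a nonzero continuous function and $q$ is finite and nonnegative on its support. Once Lemma \ref{LEMA3} is in place, it supplies—together with Lemma \ref{LEMA1} and the local minimizer $u_\lambda$ from Lemma \ref{LEMA2}—the mountain-pass geometry for $I_\lambda$ on $X$: the sphere $\|u\|=\tau$ separates $u_\lambda$, at the negative level $A_\lambda$, from $t\phi$ at large $t$, where $I_\lambda(t\phi)<A_\lambda$. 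The $(PS)$ condition in $X$ then yields a second critical point of $I_\lambda$ in $X$ at a positive mountain-pass level, hence distinct from $u_\lambda$, and this critical point is promoted to a critical point of $I_\lambda$ on the whole space $E$ by the cut-off/localization argument already used in Section 3.

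\qed
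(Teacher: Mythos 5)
Your argument is correct and is essentially the paper's own proof: both bound the double-phase gradient term above by $C_1t^{p_+}+C_2t^{m_+}$ for large $t$, bound the reaction term below by a positive multiple of $t^{q^{A}_{-}}$ using that $q\geq q^{A}_{-}$ on $\mathrm{supp}\,\phi\subset\overline{\Omega}_\delta\setminus A_{R,r}$, and conclude from the strict inequality $q^{A}_{-}>\max\{p_+,m_+\}$ in $(H10)$. The extra remarks on $\phi\in X$ and on how the lemma feeds into the mountain-pass geometry are accurate but go beyond what the statement requires.
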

\begin{proof} If  $\phi \in C_0^{\infty}({\Omega}_\delta \setminus \overline{A}_{R,r})$ and $t>0$ is large enough, by $(H10)$  we have that
	$$
	I_{\lambda}(t\phi) \leq \int_{\Omega}\left(\frac{t^{p_+}}{p(x)}|\nabla \phi|^{p(x)}+\frac{t^{m_+}a(x)}{m(x)}|\nabla \phi|^{m(x)}\right)\,dx-t^{q^{A}_{-}}\int_{\Omega}\frac{\lambda}{q(x)}|\phi|^{q(x)}\,dx.
	$$
Hence
	$$
\lim_{t \to +\infty}I_{\lambda}(t\phi)=-\infty,
	$$	
showing the desired result.
\end{proof}

\medskip
\noindent {\it Proof of Theorem \ref{t3}.} \, From Lemmas \ref{LEMA1} and \ref{LEMA3},  we derive that $I_\lambda$ satisfies the mountain pass geometry. Applying \cite[Theorem 1.1]{J1} (see also \cite{J2}), we obtain that for almost every $ \lambda \in (0,\lambda^*)$ there is a bounded  $(PS)_{c_\lambda}$ sequence for $I_\lambda$, where $c_\lambda$ is the mountain level of $I_\lambda$. Since  $I_\lambda$ verifies the $(PS)$ condition, it follows that for almost every $ \lambda \in (0,\lambda^*)$ the level $c_\lambda$ is a critical level, that is, there is $u^{\lambda} \in X$ such that
$$
I'_\lambda(u^\lambda)=0 \quad \mbox{and} \quad I_\lambda(u^\lambda)=c_\lambda>0.
$$
We conclude that problem $(P_3)$ has at least two solutions $u_\lambda$ and $u^{\lambda}$ for almost every $\lambda \in (0, \lambda^*)$ with
$$
I_{\lambda}(u_\lambda)=A_{\lambda}<0 \quad \mbox{and} \quad I_{\lambda}(u^\lambda)=c_{\lambda}>0.
$$

Finally, we can repeat the same arguments developed in Section 3 to conclude that $u_\lambda$ and $u^{\lambda}$ are, in  fact, critical points of $I_\lambda$ in $E$, hence two nontrivial solutions of problem $(P_3)$. \qed


\begin{thebibliography}{99}

\bibitem{AED} C.O. Alves, G. Ercole and M.D. Huam\'an Bola\~nos, Ground state solutions for a semilinear elliptic problem with critical-subcritical growth, {\it Adv. Nonlinear Anal.} {\bf 09} (2020), 108-123.

\bibitem{bahri} A. Bahri, J.-M. Coron, On a nonlinear elliptic equation involving the critical Sobolev exponent: the effect of the topology of the domain, {\it Comm. Pure Appl. Math.} {\bf 41} (1988), 253-294.

\bibitem{bahr19} A. Bahrouni, V.D. R\u adulescu, D.D. Repov\v{s}, Double phase transonic flow problems with variable growth: nonlinear patterns and stationary waves, {\it Nonlinearity} {\bf 32} (2019), no. 7, 2481-2495.

\bibitem{1Bar-Col-Min} P. Baroni, M. Colombo, G. Mingione,
Harnack inequalities for double phase functionals, {\it Nonlinear Anal.} {\bf 121} (2015), 206--222.

\bibitem{2Bar-Col-Min} P. Baroni, M. Colombo, G. Mingione,
 Regularity for general functionals with double phase, {\it Calc. Var. Partial Differential Equations} {\bf 57} (2018), Art 62.

\bibitem{beck} L. Beck, G. Mingione, Lipschitz bounds and non-uniform ellipticity, {\it   Commun. Pure Appl. Math.}, to appear.

\bibitem{cencelj} M. Cencelj, V.D. R\u adulescu, D.D. Repov\v{s}, Double phase problems with variable growth, {\it Nonlinear Anal.} {\bf 177} (2018), part A, 270-287.

 \bibitem{8Col-Min} M. Colombo, G. Mingione,
Regularity for double phase variational problems, {\it Arch. Ration. Mech. Anal.} {\bf 215} (2015), 443--496.

\bibitem{9Col-Min} M. Colombo, G. Mingione,
Bounded minimizers of double phase variational integrals, {\it Arch. Ration. Mech. Anal.} {\bf 218} (2015), 219--273.

\bibitem{die} L. Diening, P. H\"asto, P. Harjulehto, M. Ruzicka, {\it Lebesgue and Sobolev Spaces with Variable Exponents}, Springer Lecture Notes, vol. 2017, Springer-Verlag, Berlin, 2011.

\bibitem{FSZ} X.L. Fan, J.S. Shen and  D. Zhao,  Sobolev embedding theorems for spaces $W^{k,p(x)}(\Omega)$, {\it J. Math. Anal. Appl.} {\bf 262} (2001), 749-760.

%\bibitem{FZ} X. Fan and D. Zhao,  On the spaces $ L^{ p(x) }( \Omega ) $ and $ W^{ 1,p(x) }( \Omega ) $, {\it J. Math. Anal. Appl.} {\bf 263} (2001), %424-446.

\bibitem{J1} L. Jeanjean,  On the existence of bounded Palais-Smale sequences and application
to a Landesman-Lazer-type problem set on $\mathbb{R}^N$, {\it Proc. R. Soc. Edinb.,
Sect. A, Math.} {\bf 129} (1999), 787-809.


\bibitem{J2} L. Jeanjean, Local condition insuring bifurcation from the continuous spectrum, {\it
Math. Z.} {\bf 232} (1999), 651-664.

\bibitem{KS} J. Kobayashi and M. Otani,  The principle of symmetric criticality
for non-differentiable mappings, {\it J. Funct. Anal.} {\bf 214} (2004), 428-449.

\bibitem{Lions} P.L. Lions, The concentration-compactness principle in the calculus of variations. The limit case. I,{\it Rev. Mat. Iberoam} {\bf 1} (1985), 145-201.



\bibitem{marce1} P. Marcellini, On the definition and the lower semicontinuity of certain quasiconvex integrals, {\it Ann. Inst. H. Poincar\'e, Anal. Non Lin\'eaire} {\bf 3} (1986), 391-409.

\bibitem{marce2} P. Marcellini, Regularity and existence of solutions of elliptic equations with $p, q$--growth conditions, {\it J.
Differential Equations} {\bf 90} (1991), 1-30.

\bibitem{marce3} P. Marcellini, Everywhere regularity for a class of elliptic systems without growth conditions, {\it Ann. Scuola Norm. Sup. Pisa Cl. Sci.} {\bf (4) 23} (1996), no. 1, 1-25.


 \bibitem{morrey} Ch.B. Morrey, Jr., {\em Multiple Integrals in the Calculus of Variations}, Reprint of the 1966 edition. Classics in Mathematics, Springer-Verlag, Berlin, 2008.

\bibitem{Mu} J. Musielak, {\it Orlicz Spaces and Modular Spaces}, Lecture Notes in Mathematics, vol. 1034, Springer-Verlag, Berlin, 1983.

\bibitem{palais} R.S. Palais, The principle of symmetric criticality, {\it Comm. Math. Phys.} {\bf 69} (1979), 19-30.

\bibitem{prrzamp} N.S. Papageorgiou, V.D. R\u adulescu, D.D. Repov\v{s}, Double-phase problems with reaction of arbitrary growth, {\it Z. Angew. Math. Phys.} {\bf 69} (2018), no. 4, Art. 108, 21 pp.

\bibitem{prrpams} N.S. Papageorgiou, V.D. R\u adulescu, D.D. Repov\v{s}, Double-phase problems and a discontinuity property of the spectrum, {\it Proc. Amer. Math. Soc.} {\bf 147} (2019), no. 7, 2899-2910.

\bibitem{radrep} V.D. R\u adulescu, D.D. Repov\v{s}, { Partial Differential Equations with Variable Exponents: Variational Methods and Qualitative Analysis}, {\it CRC Press, Taylor \& Francis Group, Boca Raton FL}, 2015.

\bibitem{Strauss} W.A. Strauss, Existence of solitary waves in higher dimensions, {\it Comm. Math. Phys.} {\bf 55} (1977), 149-162.

\bibitem{zhang} Q. Zhang, V.D. R\u adulescu, Double phase anisotropic variational problems and combined effects of reaction and absorption terms, {\it J. Math. Pures Appl.} {\bf 118} (2018), 159–203.

\bibitem{23Zhikov} V.V. Zhikov,
Averaging of functionals of the calculus of variations and elasticity, {\it Math. USSR-Izves.} {\bf 29} (1987), 33--66.

\bibitem{24Zhikov} V.V. Zhikov,
On variational problems and nonlinear elliptic equations with nonstandard growth conditions, {\it J. Math. Sci.} {\bf 173} (2011), 463--570.

\end{thebibliography}
\end{document}